\setlist[enumerate]{label=\rm{(\arabic*)}, ref=(\arabic*)}
\DeclareMathOperator{\St}{St}
\DeclareMathOperator{\Aut}{Aut}
\DeclareMathOperator{\rist}{Rist}
\newcommand{\N}{\mathbb{N}}
\newcommand*{\level}[1]{\mathcal L_{#1}}
\newtheorem{thm}{Theorem}[section]
\newtheorem{lemma}[thm]{Lemma}
\newtheorem{prop}[thm]{Proposition}
\newtheorem{thmx}{Theorem}[section]
\newtheorem{corx}[thmx]{Corollary}
\newtheorem*{MainThm}{Theorem \ref{thm:MainThm}}
\theoremstyle{definition}
\newtheorem{defn}[thm]{Definition}
\title{On quasi-$2$-transitive actions of branch groups}
\author{Dominik Francoeur}
\begin{document}

\begin{abstract}
An action of a group $G$ on a set $X$ is said to be quasi-$n$-transitive if the diagonal action of $G$ on $X^n$ has only finitely many orbits. We show that branch groups, a special class of groups of automorphisms of rooted trees, cannot act quasi-2-transitively on infinite sets.
\end{abstract}

\maketitle

\section{Introduction}

A group is often best understood through its actions. For this reason, it is natural to ask what kind of actions a given group may admit. This question is of course very broad and can be taken in various interesting directions, depending on the kind of actions and the kind of properties one wishes to study.
In this article, we will be interested in what can be said of any action of an infinite group on a set, with no additional structure imposed on the set.

In this setting, a very natural question to ask is how transitive such an action can be. Recall that an action of a group $G$ on a set $X$ is said to be \emph{$n$-transitive} if any $n$-tuple of distinct points of $X$ can be sent to any other by an element of $G$.
This leads the the notion of the \emph{transitivity degree} of a group $G$, which is defined as the supremum of all $n\in \N$ such that $G$ admits a faithful $n$-transitive action. The transitivity degree is well-understood for finite groups (see for instance \cite{DixonMortimer96} and the references therein) and has been studied for various families of infinite groups. For instance, it was shown by Hull and Osin to be infinite for every countable acylindrically hyperbolic group with trivial finite radical \cite{HullOsin16}, thus generalising many previous results, including results on non-abelian free groups \cite{McDonough77}, surface groups \cite{Kitroser12}, non-elementary hyperbolic groups \cite{Chaynikov12} and outer automorphisms of free groups \cite{GarionGlasner13}. On the other end of the spectrum, it is known to be $1$ for every infinite residually finite virtually solvable group (see \cite[Corollary 4.6]{HullOsin16}), and it is known to be finite for every group satisfying a mixed identity, unless it contains a normal subgroup isomorphic to the group of finitary alternating permutations of an infinite set (\cite[Proposition A.1]{LeBoudecMatteBon19Pre}). Le Boudec and Matte Bon have also shown that the transitivity degree of certain groups of homeomorphisms of the circle and of automorphisms of trees have transitivity degree at most $3$ \cite{LeBoudecMatteBon19Pre}, a result that was later used by Fima, Le Maître, Moon and Stalder to show that for groups of automorphisms of a tree whose action is minimal and of general type, the transitivity degree is either at most $3$ or is infinite \cite{FimaLeMaitreMoonStalder20Pre}.

Instead of $n$-transitivity, other, slightly weaker, properties of the action of a group on a set can also be considered. For instance, for $n\in \N$, an action of a group $G$ on a set $X$ is said to be \emph{$n$-set-transitive}, or \emph{$n$-homogeneous}, if the action of $G$ is transitive on subsets of $X$ of size $n$. For finite groups, this property and its relation to $n$-transitivity are well-understood \cite{LivingstoneWagner65, Kantor72}. In the context of infinite groups, this was studied notably by Cameron, who showed that a group of permutation of an infinite set that is $n$-set-transitive for all $n\in \N$ but is not $n$-transitive for all $n\in \N$ must either preserve or reverse a linear or circular order \cite{Cameron76}.

Another possible generalisation of $n$-transitivity is what we call here \emph{quasi-$n$-transitivity}. The action of a group $G$ on a set $X$ is said to be quasi-$n$-transitive if the action of $G$ on $X^n$ has only finitely many orbits. If the action of $G$ on $X$ is quasi-$n$-transitive for all $n\in \N$, then it is said to be \emph{oligomorphic}. 
Groups with oligomorphic actions have been studied, notably in connection with model theory (see \cite{Cameron09}, for instance).
In a completely different direction, finitely generated groups with quasi-$2$-transitive actions were studied by de Cornulier \cite{Cornulier06} in relation with the finite presentation of permutational wreath products. Indeed, he showed that if $W$ is a non-trivial group and $G$ is a group with a given action on a set $X$, the permutational wreath product $W\wr_X G$ is finitely presented if and only if both $W$ and $G$ are finitely presented, the stabilisers of the action of $G$ on $X$ are finitely generated and the action of $G$ on $X$ is quasi-$2$-transitive.

It is easy to see that
\[ n\text{-transitivity} \Rightarrow n\text{-set-transitivity} \Rightarrow \text{quasi-}n\text{-transitivity}.\]
Thus, obtaining a bound on the possible values of $n$ such that a group admits a quasi-$n$-transitive action immediately yields bounds on the other two transitivity properties. In this article, we obtain precisely such a bound for a large family of groups known as \emph{branch groups}. Branch groups are a special class of groups of automorphisms of rooted trees (see Definition \ref{defn:BranchGroups}), first formally defined by Grigorchuk in 1997 in order to capture some of the essential properties of the group now known as the Grigorchuk group, which was the first example of a group of intermediate growth  \cite{Grigorchuk83}. In addition to being a rich source of groups with properties that are difficult to find elsewhere (such as the aforementioned intermediate growth, or groups that are infinite, finitely generated and periodic), branch groups also naturally appear in the classification of just-infinite groups \cite{Grigorchuk00}, and are the subject of active research.

Our main result is the following.
\begin{thmx}\label{thm:MainThm}
A branch group cannot act quasi-$2$-transitively on an infinite set.
\end{thmx}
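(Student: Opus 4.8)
The plan is to reduce to a transitive action and then exploit the \emph{normal} finite-index rigid level stabilizers $\rist_G(n)=\prod_{v\in\level{n}}\rist_G(v)$ to build an unbounded $G$-invariant of pairs, which directly contradicts having finitely many orbits on $X^2$. First I would reduce to transitivity: if $G$ acts quasi-$2$-transitively on an infinite $X$, then $G$ has finitely many orbits on $X$, so some orbit $Y$ is infinite, and restricting to $Y$ preserves quasi-$2$-transitivity. Fixing $x_0\in Y$ and writing $H=G_{x_0}$, the hypothesis becomes: $H$ has infinite index in $G$ but only finitely many orbits on $Y$, and the goal is to derive a contradiction.

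Next I would set up a dichotomy on vertices. Call $v$ \emph{big} if $\rist_G(v)\cdot x_0$ is infinite and \emph{small} otherwise; since $\rist_G(w)\le\rist_G(v)$ whenever $w$ is a descendant of $v$, smallness passes to descendants. If every $v\in\level{n}$ were small, then $\prod_{v}(\rist_G(v)\cap H)$ would have finite index in the finite-index subgroup $\rist_G(n)$ while fixing $x_0$, forcing $[G:H]<\infty$; so every level carries a big vertex. Applying the same argument inside $\rist_G(v)$ — whose own rigid stabilizers $\rist_G(w)$ for $w$ a child of $v$ still form a finite-index product — shows every big vertex has a big child. Hence the big vertices form an infinite, ancestor-closed subtree $\mathcal B(x_0)$ with no dead ends, and $x\mapsto\mathcal B(x)$ is $G$-equivariant because $v$ is big for $x$ exactly when $gv$ is big for $gx$.

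The key invariant comes from normality: each $\rist_G(n)$ is normal of finite index, so $\rist_G(n)\cdot x$ depends $G$-equivariantly on $x$, and $\delta(x,y):=\sup\{\,n:y\in\rist_G(n)\,x\,\}$ is a $G$-invariant of pairs valued in $\N\cup\{\infty\}$; this is the abstract analogue of the level at which two boundary rays first diverge, an invariant automorphisms of a tree always preserve. If $\delta$ takes infinitely many finite values on $Y\times Y$, then since each value is a union of $G$-orbits of pairs, the action is not quasi-$2$-transitive and we are done. Otherwise the nested orbits $\rist_G(n)\,x_0$ stabilize to an infinite set $C$ on which arbitrarily deep rigid stabilizers act transitively, and we are in the degenerate regime. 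Here I would use the equivariant assignment $\mathcal B(x)$: if $\mathcal B(x)$ has only finitely many ends, these ends give an equivariant map from $Y$ to finite subsets of the boundary, whose mutual divergence levels furnish arbitrarily large invariants of pairs unless $G$ stabilizes a bounded configuration of boundary rays — impossible for a level-transitive group, since it would have to fix the finitely many level-$n$ vertices on those rays for all $n$.

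The hardest case, and the main obstacle, is when $\mathcal B(x)$ has infinitely many ends: one then has infinitely many pairwise disjoint big vertices with pairwise commuting infinite rigid stabilizers, and must turn this commuting independence into infinitely many pair-orbits. The difficulty is that, for an abstract action, $H$ could be a \emph{diagonal} subgroup across these commuting factors, collapsing the naive independence of the points $r_v\cdot x_0$ with $r_v\in\rist_G(v)$. Overcoming this is where I expect to need the full branch structure — specifically the normal subgroup theorem that every nontrivial normal subgroup of $G$ contains $\rist_G(m)'$ for some $m$ — to show that $H$ cannot simultaneously have infinite index and meet every deep rigid level stabilizer in such a large diagonal, thereby forcing infinitely many suborbits and completing the contradiction.
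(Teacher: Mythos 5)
Your reduction to a transitive action with an infinite-index, finite-bi-index point stabiliser $H=\St_G(x_0)$ is exactly the paper's starting point (Proposition \ref{prop:Infinite2TransitiveFiniteBiIndex}), and your observations about big vertices and the pair invariant $\delta(x,y)=\sup\{n : y\in\rist_G(n)\,x\}$ are correct as far as they go. But the argument has a genuine gap, and it sits exactly where you say it does: the ``hardest case'' is never resolved, only described. Worse, essentially every instance of the problem lands in that case. Since $\rist_G(n)$ is normal, each $H\rist_G(n)$ is a subgroup containing $H$; since $H$ has finite bi-index it is almost maximal (Proposition \ref{prop:FiniteBiIndexAlmostMaximal}), so the descending chain $H\rist_G(1)\supseteq H\rist_G(2)\supseteq\cdots$ stabilises and $\delta$ automatically takes only finitely many values on $Y\times Y$. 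Thus the first branch of your dichotomy is vacuous and you are always in the ``degenerate regime.'' The finitely-many-ends case is also not closed: the end-sets $\partial\mathcal B(x)$ could a priori coincide for all $x$ in an infinite suborbit, so the divergence-level invariant need not take many values, and ``$G$ stabilises a bounded configuration of rays'' is not visibly the only failure mode.

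The missing content is precisely what Section \ref{sec:ProdenseSubgroups} of the paper supplies. The paper converts finite bi-index into quasi-prodensity (Lemma \ref{lemma:LookOnlyAlmostProDense}, via almost maximality and property $BF$ for proper quotients), and then proves the key Lemma \ref{lemma:RistNonTrivial}: if all sections $H_v$ had finite index in $G_v$, then $\rist_H(v)\ne 1$ for every $v$. This is exactly the assertion that $H$ cannot be a ``large diagonal'' across the commuting rigid stabilisers, and it is not a formal consequence of the normal subgroup theorem you invoke ($\rist_G'(m)\le N$ for nontrivial $N\trianglelefteq G$), since that applies only to normal or subnormal subgroups, whereas $H$ is merely quasi-prodense. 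The paper's proof builds, from a hypothetical diagonal collapse, a homomorphism $\alpha$ identifying the images of $\St_H(n+m)$ under the section maps at two distinct vertices, and contradicts quasi-prodensity by tracking a nontrivial element of $\rist_{\overline{H}}(vw)$. From there, Theorem \ref{thm:ProjectionsAreProper} and Proposition \ref{prop:NbProjectionsInfiniteIndexUnbounded} yield arbitrarily many level-$n$ vertices with $[G_v:H_v]=\infty$, and the double-coset invariant $\nu_n(g)=|\{v\in\level{n} : \varphi_v(g)\in H_v\}|$ of Lemma \ref{lemma:InvariantBiCosets} --- a finer, counting version of your $\delta$ --- then produces unboundedly many double cosets. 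Until you can prove the analogue of Lemma \ref{lemma:RistNonTrivial}, your argument does not go through.
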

As a direct corollary, we obtain
\begin{corx}
A branch group cannot act $2$-set transitively or $2$-transitively on an infinite set. In particular, the transitivity degree of a branch group is $1$.
\end{corx}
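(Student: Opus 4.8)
The plan is to prove the contrapositive: assume a branch group $G$ acts quasi-$2$-transitively on an infinite set $X$ and derive a contradiction. The strategy is to exploit the tension between two facts. On one hand, branch groups have a very rigid internal structure: they act on a rooted tree $T$, and for each vertex $v$ at level $n$, the rigid stabiliser $\rist_G(v)$ (the subgroup fixing everything outside the subtree hanging from $v$) is nontrivial, and the product of these over all vertices at level $n$ has finite index in $G$. In particular $G$ contains a large supply of commuting nontrivial subgroups, one for each vertex of the tree, and these subgroups can be made to have arbitrarily small "support" deep in the tree. On the other hand, quasi-$2$-transitivity forces the action on $X$ to be highly homogeneous: there are only finitely many orbits on $X^2$, so in particular finitely many orbits on ordered pairs of distinct points, which means the point stabilisers act with uniformly boundedly many orbits on $X$.

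First I would record the basic consequences of quasi-$2$-transitivity. Since $X$ is infinite and $G$ acts quasi-$2$-transitively, the action is transitive (only one orbit can contain the diagonal, forcing transitivity on $X$ after separating the diagonal orbit from the off-diagonal orbits). Fix a point $x \in X$ with stabiliser $H = G_x$; then $H$ has only finitely many orbits on $X \setminus \{x\}$, say $k$ of them. Equivalently, $G$ acts on $X$ with a point stabiliser of "finite rank" in the permutation sense. The key pressure point is then to compare the finitely many suborbits against the unboundedly many independent rigid stabilisers. Concretely, I would pick a level $n$ large enough that the number of vertices $m$ at level $n$ exceeds $k$ (or some appropriate bound derived from $k$), and consider the rigid stabilisers $\rist_G(v_1), \dots, \rist_G(v_m)$ of the level-$n$ vertices, which pairwise commute and are each nontrivial.

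The heart of the argument will be a counting or pigeonhole contradiction run on the action of these commuting subgroups on $X$. Each nontrivial $\rist_G(v_i)$ must move some point of $X$; I would analyse the supports $\mathrm{supp}(\rist_G(v_i)) \subseteq X$ and use commutativity together with the transitivity of $G$ to show that these supports cannot all overlap too much, yet quasi-$2$-transitivity caps how they can be distributed among the finitely many $H$-orbits. The likely mechanism is: because the suborbits are finite in number, two of the commuting rigid stabilisers are forced into a configuration (same orbit, or a shared moved point with commuting but nontrivial actions) that is incompatible with them commuting and being nontrivial — for instance producing a point whose stabiliser in one rigid subgroup must both fix and move a related point. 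The main obstacle I anticipate is precisely engineering this incompatibility: translating the combinatorial abundance of commuting rigid stabilisers in $G$ into a statement about the permutation action on $X$, since a priori the branch structure on $T$ need bear no relation to the action on $X$. I expect the resolution to come from showing that a suitable finite-index product of rigid stabilisers must act on $X$ with a controlled fixed-point or support structure, and that the sheer number $m \gg k$ of independent commuting factors overwhelms the bounded number of suborbits, yielding the contradiction.
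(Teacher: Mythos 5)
There is a genuine gap. First, a small but real error at the outset: quasi-$2$-transitivity does \emph{not} imply transitivity on $X$ — finitely many orbits on $X^2$ only gives finitely many orbits on $X$ (the diagonal splits into one orbit per $G$-orbit on $X$), so you must instead pass to an infinite orbit, as in Proposition \ref{prop:Infinite2TransitiveFiniteBiIndex}; this is fixable. The serious problem is that the heart of your argument is absent. You correctly isolate the tension — unboundedly many commuting nontrivial rigid stabilisers versus a point stabiliser $H$ with finitely many suborbits — but you then write that you ``expect the resolution to come from'' a fixed-point/support analysis on $X$, and you yourself flag the obstruction: the branch structure on $T$ has no a priori relation to the action on $X$. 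That obstruction is exactly where all the work lies, and a pigeonhole on supports in $X$ does not resolve it. The paper's resolution is to work entirely inside $G$ with the subgroup $H$ of finite bi-index: one shows $H$ is quasi-prodense (Lemma \ref{lemma:LookOnlyAlmostProDense}, using that proper quotients have property $BF$), then proves the delicate Theorem \ref{thm:ProjectionsAreProper} and Proposition \ref{prop:NbProjectionsInfiniteIndexUnbounded} that the number of level-$n$ sections $H_v$ of infinite index in $G_v$ grows without bound, and finally constructs the bi-coset invariant $\nu_n$ of Lemma \ref{lemma:InvariantBiCosets} to force infinitely many double cosets. None of these steps is suggested by, or recoverable from, your sketch.

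It is also worth noting that your commuting-supports heuristic is essentially the Le Boudec argument of Proposition \ref{prop:TransitivityDegree}: three commuting nontrivial normal subgroups of a finite-index subgroup are incompatible with a faithful primitive action, and finite-index subgroups of infinite $2$-transitive groups act primitively. That argument genuinely proves the $2$-transitive part of the corollary (modulo handling non-faithful actions via Proposition \ref{prop:QuotientsBranchGroupsBF}), but it relies on primitivity, which is unavailable under mere quasi-$2$-transitivity — which is why the paper needs the heavier machinery for Theorem \ref{thm:MainThm}. As for the statement you were actually asked to prove, the paper derives it in one line from Theorem \ref{thm:MainThm} via the implications $2$-transitive $\Rightarrow$ $2$-set-transitive $\Rightarrow$ quasi-$2$-transitive; you have instead set out to reprove Theorem \ref{thm:MainThm} itself, and the proof of that theorem is not achieved by your outline.
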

Note, however, that this last result on the transitivity degree of branch groups was already known by Adrien Le Boudec prior to this work, through a different and shorter argument that we present below in Proposition \ref{prop:TransitivityDegree}. It was in fact a source of motivation for the present article, since in general, the transitivity degree and quasi-transitivity degree of a group need not be equal, and can in certain cases be very different (here, by \emph{quasi-transitivity degree}, we mean the supremum of all $n$ such that the group admits a faithful quasi-$n$-transitive action). For example, Thompson's group $T$ has transitivity degree $2$ \cite[Corollary 1.3]{LeBoudecMatteBon19Pre} but infinite quasi-transitivity degree. As Theorem \ref{thm:MainThm} shows, however, this kind of phenomenon cannot occur in branch groups.

Our main result can also be seen as an extension of the study of maximal subgroups of branch groups, a topic that has attracted a lot of attention in recent years \cite{Pervova05,AlexoudasKlopschThillaisundaram16,FrancoeurGarrido18,KlopschThillaisundaram18}. Indeed, by a result of de Cornulier (\cite[Lemmma 3.2]{Cornulier06}, see also Proposition \ref{prop:FiniteBiIndexAlmostMaximal} below), if every almost maximal subgroup (i.e., contained in only finitely many subgroups) of a group is of finite index, then the group cannot act quasi-$2$-transitively on an infinite set. It is known that the condition that every almost maximal subgroup is of finite index does not hold for every branch groups \cite{Bondarenko10,FrancoeurGarrido18}, but Theorem \ref{thm:MainThm} shows that, nevertheless, the conclusion holds for all branch groups.

From the work of de Cornulier on finite presentations of permutational wreath products \cite{Cornulier06}, we also obtain the following corollary.

\begin{corx}
Let $G$ we a branch group acting on an infinite set $X$, and let $W$ be a non-trivial group. Then, the group $W\wr_XG$ is never finitely presented.
\end{corx}

The existence of a finitely presented branch group is currently an open question, but as the previous corollary shows, even if such a group existed, its permutational wreath products over infinite sets could never be finitely presented. Such permutational wreath products with a branch group over an infinite set were notably used by Bartholdi and Erschler to construct groups with prescribed intermediate growth \cite{BartholdiErschler12, BartholdiErschler14}. As our corollary shows, there is no hope of using their method to obtain finitely presented groups of prescribed intermediate growth.

The article is organised as follows. In Section \ref{sec:Preliminaries}, we collect a few facts and definitions that will be useful throughout the article. In Section \ref{sec:ProdenseSubgroups}, we prove some technical results about what we call quasi-prodense subgroups of branch groups that will be crucial in the proof. Lastly, in Section \ref{sec:Proof}, we prove Theorem \ref{thm:MainThm}.

\subsection*{Acknowledgements} The author would like to thank Adrien Le Boudec for suggesting this problem and for many useful discussions regarding this work. This work was performed within the framework of the LABEX MILYON (ANR-10-LABX-0070) of Université de Lyon, within the program "Investissements d'Avenir" (ANR-11-IDEX- 0007) operated by the French National Research Agency (ANR).

\section{Preliminaries}\label{sec:Preliminaries}

\subsection{Quasi-2-transitive actions and finite bi-index}

In this section, we record a few facts about quasi-2-transitive actions and their relation with subgroups of finite bi-index that will be useful later on.

Recall from the introduction that an action of a group $G$ on a set $X$ is said to be \emph{quasi-2-transitive} if the number of orbits of the diagonal action of $G$ on $X^2$ is finite. Of course, any action on a finite set is trivially quasi-2-transitive, so in what follows, we will only concern ourselves with actions on infinite set. As we will see, the existence of such an action is equivalent to the existence of an infinite index subgroup of \emph{finite bi-index}.

\begin{defn}
Let $G$ be a group and let $H\leq G$ be a subgroup. We say that $H$ has \emph{finite bi-index} if the set $H\backslash G / H$ of double cosets of $H$ is finite.
\end{defn}

\begin{prop}\label{prop:Infinite2TransitiveFiniteBiIndex}
A group $G$ admits a quasi-$2$-transitive action on an infinite set if and only if there exists an infinite index subgroup $H<G$ of finite bi-index.
\end{prop}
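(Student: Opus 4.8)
The plan is to reduce everything to the classical correspondence between orbits on ordered pairs and double cosets. The central observation is this: if $G$ acts transitively on a set $Y$, then after choosing a base point $y_0$ and writing $Y = G/H$ with $H = \St_G(y_0)$ (with $G$ acting by left translation), every $G$-orbit on $Y \times Y$ contains a pair of the form $(H, gH)$, obtained by translating the first coordinate back to the base point. One then checks that $(H, g_1H)$ and $(H, g_2H)$ lie in the same $G$-orbit if and only if $Hg_1H = Hg_2H$, so that the assignment $(H, gH) \mapsto HgH$ induces a bijection between the set of $G$-orbits on $Y \times Y$ and the set $H \backslash G / H$ of double cosets. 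Thus for transitive actions, quasi-$2$-transitivity is literally the same thing as finiteness of the bi-index of a point stabiliser, and the infiniteness of $Y$ corresponds to $H$ having infinite index.

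For the direction ($\Leftarrow$), I would take an infinite-index subgroup $H < G$ of finite bi-index and let $G$ act on $X = G/H$ by left translation. This action is transitive, and $X$ is infinite because $[G:H]$ is infinite. By the correspondence above, the number of $G$-orbits on $X \times X$ equals $|H \backslash G / H|$, which is finite by hypothesis, so the action is quasi-$2$-transitive.

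For the direction ($\Rightarrow$), the first step is to reduce a general quasi-$2$-transitive action to a transitive one. If $G$ acts quasi-$2$-transitively on an infinite set $X$, then the equivariant projection $X \times X \to X$ onto the first coordinate shows that the number of $G$-orbits on $X$ is at most the number on $X \times X$, hence finite. Since $X$ is infinite and there are only finitely many orbits, at least one orbit $Y \subseteq X$ is infinite. Moreover $Y \times Y$ is a $G$-invariant subset of $X \times X$, so it is a union of some of the finitely many $G$-orbits of $X \times X$; consequently $G$ acts on the infinite set $Y$ transitively and quasi-$2$-transitively. Taking $H = \St_G(y_0)$ for any $y_0 \in Y$ then yields, via the correspondence, an infinite-index subgroup of finite bi-index, completing the proof. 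The only point requiring a little care—and the closest thing to an obstacle in an otherwise routine argument—is this reduction to the transitive case, namely verifying that restricting to an infinite orbit genuinely preserves the finiteness of the orbit count on pairs; everything else is the standard double-coset bookkeeping.
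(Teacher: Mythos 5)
Your proof is correct and follows essentially the same route as the paper: the identification of $G$-orbits on $(G/H)\times(G/H)$ with double cosets $H\backslash G/H$ for the backward direction, and the passage to the stabiliser of a point with infinite orbit for the forward direction. The only difference is that you spell out more explicitly why an infinite orbit exists and why restricting to it preserves quasi-$2$-transitivity, steps the paper states more briefly.
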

\begin{proof}
Let $H<G$ be a subgroup of infinite index and let us consider the action of $G$ on the infinite set $G/H$. We claim that this action is quasi-2-transitive if and only if $H$ is of finite bi-index in $G$. Indeed, for $g_1,g_2,g_3,g_4\in G$, the couples $(g_1H,g_2H)$ and $(g_3H,g_4H)$ are in the same orbit under the diagonal action of $G$ on $G/H\times G/H$ if and only if $(H, g_1^{-1}g_2H)$ is in the same orbit as $(H, g_3^{-1}g_4H)$, which is the case if and only if $g_3^{-1}g_4\in H g_1^{-1}g_2H$. Therefore, the number of orbits is finite if and only if the set $H\backslash G / H$ of double cosets is finite.

To conclude the proof, we simply remark that if $G$ admits a quasi-2-transitive action on an infinite set $X$, then there must exist some $x\in X$ whose orbit under the action of $G$ is infinite. Then, by letting $H=\St_G(x)$ be the stabiliser of this point, we notice that $H$ is of infinite index and that the action of $G$ on $G/H$ is also quasi-2-transitive.
\end{proof}

In light of the previous proposition, the following definition, due to de Cornulier \cite[Definition 3.9]{Cornulier06}, will be very useful.

\begin{defn}\label{defn:PropertyBF}
Let $G$ be a group. We say that $G$ has property $BF$ if every subgroup of finite bi-index has finite index.
\end{defn}

Thus, a group admits a quasi-$2$-transitive action on an infinite set if and only if it does not have property $BF$.

In \cite{Cornulier06}, de Cornulier studied various consequences of property $BF$ and different obstructions for a group to be of finite bi-index. We will make use of the following result, for which we include a proof for completeness.

\begin{prop}[{\cite[Lemma 3.2]{Cornulier06}}]\label{prop:FiniteBiIndexAlmostMaximal}
Let $G$ be a group and let $H$ be a subgroup of finite bi-index. Then, $H$ is \emph{almost maximal}, meaning that there are only finitely many subgroups of $G$ containing $H$.
\end{prop}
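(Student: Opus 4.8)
The plan is to exploit the fact that the double cosets $H\backslash G/H$ partition $G$, and to observe that any intermediate subgroup is forced to be a union of entire double cosets. Concretely, suppose $H \le K \le G$. For any $k \in K$, since $H \le K$ is contained in $K$ on both sides and $K$ is closed under multiplication, we have $HkH \subseteq K$; thus $K = \bigcup_{k \in K} HkH$ is a union of $(H,H)$-double cosets. The key point to verify here is precisely this closure property, which uses that $H$ sits inside $K$ (not merely that $H$ and $K$ are subgroups): it is the two-sided containment $H \le K$ that guarantees whole double cosets, rather than just left or right cosets, are swallowed.

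Having established this, I would define the map sending an intermediate subgroup $K$ (with $H \le K \le G$) to the collection of double cosets it contains, i.e.\ to the corresponding subset of $H\backslash G/H$. This map is injective: a subgroup is determined by its underlying set of elements, and since $K$ is a union of double cosets, it is recovered from the list of double cosets it contains. Therefore the number of subgroups $K$ with $H \le K \le G$ is at most the number of subsets of $H\backslash G/H$, namely $2^{\lvert H\backslash G/H\rvert}$.

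Since $H$ has finite bi-index, $\lvert H\backslash G/H\rvert$ is finite, so this bound is finite, and we conclude that only finitely many subgroups of $G$ contain $H$, i.e.\ $H$ is almost maximal. I do not expect any genuine obstacle in this argument; the only step requiring a moment's care is the verification that an intermediate subgroup absorbs complete double cosets, after which the finiteness is immediate from a crude counting of subsets of a finite set. (In particular one does not even need a sharp count—any finite bound on the number of unions of double cosets suffices for the stated conclusion.)
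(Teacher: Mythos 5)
Your argument is correct and is essentially identical to the paper's proof: both observe that any intermediate subgroup $K$ with $H \le K \le G$ absorbs whole double cosets ($HkH \subseteq K$ for $k \in K$), hence is a union of double cosets and is determined by the subset of $H\backslash G/H$ it contains, giving the bound $2^{|H\backslash G/H|}$. No issues.
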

\begin{proof}
Let $L\leq G$ be a subgroup containing $H$. Then, for every $g\in L$, we have $HgH\subseteq L$. As $H$ is of finite bi-index in $G$, there are only finitely many, let us say $n$, different double cosets, from which we conclude that there can be at most $2^n$ different subgroups of $G$ containing $H$.
\end{proof}

We will also need the following elementary fact about the bi-index of subgroups.

\begin{prop}\label{prop:IntersectionFiniteBiIndex}
Let $G$ be a group, let $H\leq G$ be a subgroup of finite bi-index, and let $K\leq G$ be a subgroup of finite index. Then, $H\cap K$ is of finite bi-index in $K$.
\end{prop}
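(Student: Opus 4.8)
The plan is to reduce the statement to a single auxiliary fact: that finite bi-index is inherited by finite-index subgroups. Concretely, set $L = H \cap K$ and proceed in two moves. First I would show that $L$ has finite bi-index in the whole of $G$; then I would observe that the double cosets of $L$ in $K$ are nothing but those $L$-double cosets of $G$ that happen to lie inside $K$, so their number is bounded by $\lvert L \backslash G / L\rvert$, which the first move shows to be finite. The second move is essentially free: since $L \le K$, for any $k \in K$ the set $LkL$ computed inside $K$ coincides as a subset of $G$ with $LkL$ computed inside $G$, so the natural map $L \backslash K / L \to L \backslash G / L$ is injective.

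Thus the heart of the argument is the first move, which I would isolate as a lemma: if $H$ has finite bi-index in $G$ and $L \le H$ has finite index in $H$, then $L$ has finite bi-index in $G$. To apply it I first need that $[H : L] = [H : H \cap K]$ is finite; this follows from the standard injection $H/(H\cap K) \hookrightarrow G/K$ together with the hypothesis $[G:K] < \infty$. For the lemma itself, write $m = [H:L]$ and fix right-coset representatives $H = \bigsqcup_{i=1}^m L a_i$ and left-coset representatives $H = \bigsqcup_{j=1}^m b_j L$. The key computation is that every $H$-double coset splits into finitely many $L$-double cosets, via
\[ H g H = \bigcup_{i=1}^m \bigcup_{j=1}^m L\, (a_i g b_j)\, L, \]
which one verifies by writing an arbitrary element $h_1 g h_2$ of $HgH$ as $\ell a_i g b_j \ell'$ using $h_1 = \ell a_i$ and $h_2 = b_j \ell'$. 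Since $G$ is a finite union of $H$-double cosets, say $n = \lvert H \backslash G / H\rvert$ of them, it follows that $\lvert L \backslash G / L\rvert \le n m^2 < \infty$.

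The step I expect to require the most care is this double-coset decomposition of $HgH$: one must use right-coset representatives on the left factor and left-coset representatives on the right factor (not the same family on both sides), since otherwise the conjugation by $g$ prevents the pieces from organising into genuine $L$-double cosets. Once that bookkeeping is set up correctly, the remainder is routine counting. The only other point worth stating explicitly is the identification in the second move, namely that passing from double cosets in $K$ to double cosets in $G$ is harmless precisely because $L$ is contained in $K$.
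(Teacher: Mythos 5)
Your proof is correct and rests on the same key computation as the paper's: splitting each $H$-double coset into at most $m^2$ double cosets of $H\cap K$ by means of a right transversal on one side and a left transversal on the other, yielding the same bound $nm^2$. The only difference is organizational — you first establish finite bi-index of $H\cap K$ in all of $G$ and then restrict to $K$ via the injection $ (H\cap K)\backslash K/(H\cap K) \hookrightarrow (H\cap K)\backslash G/(H\cap K)$, whereas the paper decomposes $K$ directly — but this is a repackaging of the same argument rather than a different route.
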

\begin{proof}
Let $n$ be the bi-index of $H$ in $G$ and let $m$ be the index of $H\cap K$ in $H$. We claim that the bi-index of $H\cap K$ in $K$ is at most $nm^2$. Let us choose a transversal $T=\{t_1, \dots, t_n\}$ for the double cosets of $H$ in $G$ and left and right transversals $L=\{l_1,\dots, l_m\}$ and $R=\{r_1,\dots, r_m\}$ for the cosets of $H\cap K$ in $H$. Now, let $g\in K$ be an arbitrary element. Then, there exist $i\in \{1,\dots, n\}$, $j,k\in \{1,\dots, m\}$ and $h_1,h_2\in H\cap K$ such that $g=h_1r_jt_il_kh_2$. Thus, we conclude that
\[K = \bigcup_{i,j,k} (H\cap K) r_jt_il_k (H\cap K)\]
where the union is taken over all $i,j,k$ such that $r_jt_il_k\in K$. The result follows.
\end{proof}

\subsection{Branch groups}

In this section, we recall the definition of branch groups and collect a few of their properties that will be useful in the sequel. We refer the reader to \cite{BartholdiGrigorchukSunic03} for a more detailed introduction to branch groups.

\subsubsection{Spherically homogeneous rooted trees}\label{subsubsec:RootedTrees}

Let $(A_i)_{i\in \N}$ be a sequence of finite sets, that we will call \emph{alphabets}, such that $|A_i|\geq 2$ for every $i\in \N$. For every $k\in \N_{>0}$, we define the set
\[(A_i)_{i\in \N}^{k} = A_0 \times A_{1} \times \dots \times A_{k-1}.\]
An element $v=(a_0, a_{1}, \dots, a_{k-1})$ of this set will be called a \emph{word of length $k$ in the alphabet $(A_{i})_{i\in \N}$}. By convention, we set $(A_i)_{i\in\N}^0 = \{\epsilon\}$, where $\epsilon$ is an element that we will call the \emph{empty word}. We will denote by
\[(A_i)_{i\in \N}^{*} = \sqcup_{k=0}^{\infty} (A_i)_{i\in\N}^k\]
the set of all words in the alphabet $(A_i)_{i\in \N}$. For $v\in (A_i)_{i\in \N}^*$, we will call the \emph{length of $v$}, denoted by $|v|$, the unique $k\in \N$ such that $v\in (A_i)_{i\in \N}^k$. In other words, if $|v|=k$, then there exist $a_i\in A_i$ for all $0\leq i < k$ such that $v=(a_0, a_1, \dots, a_{k-1})$.

For any given $k\in \N$, there is a natural operation of concatenation between a word in $(A_i)_{i\in \N}^k$ and a word in $(A_{i+k})_{i\in \N}^*$. Indeed, if $v=(a_0, \dots, a_{k-1})$ is a word of length $k$ in the alphabet $(A_i)_{i\in \N}$ and $w=(b_0, \dots, b_{l-1})$ is a word of length $l$ in the alphabet $(A_{i+k})_{i\in \N}$, then we can define the word $vw=(a_0,\dots, a_{k-1}, b_0, \dots, b_{l-1})$, which is a word of length $k+l$ in the alphabet $(A_i)_{i\in \N}$. By convention, for any $v\in (A_i)_{i\in \N}^*$, we have $\epsilon v = v\epsilon = v$.

Let $v,w\in (A_i)_{i\in \N}^*$ be two words such that $|v|\leq |w|$. We say that $v$ is a \emph{prefix} of $w$ if there is a word $v'\in (A_{i+|v|})_{i\in \N}^*$ such that $vv'=w$. The notion of prefix allows us to define a partial order on the set $(A_i)_{i\in \N}^*$. Indeed, we will say that $v\leq w$ if $v$ is a prefix of $w$. Notice that the empty word is a prefix of every word and is thus the least element of this partial order.

Let $T=((A_i)_{i\in \N}^*, E, \epsilon)$ be the rooted graph (that is, a graph with a distinguished vertex called the \emph{root}) whose vertex set is $(A_i)_{i\in \N}^*$, whose root is $\epsilon$ and whose edge set is
\[E=\big\{ \{v,w\}\subset (A_i)_{i\in \N}^* \bigm\vert v\leq w, |w|=|v|+1 \big\}.\]
It is easy to deduce from the definition that the graph thus constructed is a rooted tree, meaning that it is a connected and simply connected rooted graph. It also follows easily from the definition that the length of a vertex in this tree (in the sense of the length of words defined above) corresponds to the combinatorial distance between the root and this vertex. For $n\in \N$, we will call the set $(A_i)_{i\in \N}^n$ of all vertices of length $n$ the \emph{$n$\textsuperscript{th}} level of the tree, and we will often prefer to denote this set by $\level{n}$. By an abuse of notation, we will also frequently write $v\in T$ instead of $v\in (A_i)_{i\in \N}^*$ to mean that $v$ is a vertex of the rooted tree $T$.

If $v,w\in T$ are two vertices, we will say that $v$ is an \emph{ancestor} of $w$ and that $w$ is a \emph{descendant} of $v$ if $v\leq w$. In the particular case where $v\leq w$ with $|v|+1=|w|$, we will say that $v$ is the \emph{parent} of $w$ and that $w$ is a \emph{child} of $v$.

It is immediate from the construction of the rooted tree described above that two vertices on the same level always have the same degree. A rooted tree satisfying this property is said to be \emph{spherically homogeneous}.

It is easy to see that any spherically homogeneous rooted tree whose vertices have at least two children is isomorphic to a rooted tree of the form $T=((A_i)_{i\in \N}^*, E, \epsilon)$ as described above. In what follows, when we say \emph{rooted tree}, we will always mean a spherically homogeneous rooted tree of the form $T=((A_i)_{i\in \N}^*, E, \epsilon)$ for some sequence of alphabets $(A_i)_{i\in \N}$ such that $|A_i|\geq 2$ for all $i\in \N$, even though we will not generally mention this sequence of alphabets explicitly.

\subsubsection{Rooted subtrees}

Let $T=((A)_{i\in \N}^*, E, \epsilon)$ be a spherically homogeneous rooted tree, as described in the above section. For any vertex $v\in T$, we can define a rooted subtree $T_v$ of $T$ as the subgraph spanned by all the descendants of $v$, rooted at $v$. Thus, every vertex of $T_v$ is a word of the form $vw$ for some $w\in (A_{i+|v|})_{i\in \N}^*$. Let us denote by $T_{|v|} = ((A_{i+|v|})_{i\in\N}^*, E, \epsilon)$ the rooted tree corresponding to the sequence of alphabets $(A_{i+|v|})_{i\in \N}$. It is clear that we have an isomorphism of rooted trees between $T_v$ and $T_{|v|}$ obtained simply by deleting the prefix $v$ from any vertex of $T_v$. Using this, we also have a canonical isomorphism between $T_v$ and $T_w$ for any $v,w\in T$ on the same level, obtained simply by replacing the prefix $v$ with the prefix $w$. In what follows, we will frequently make use of these isomorphisms without mentioning them.

\subsubsection{Automorphisms of rooted trees}

Let $T=((A)_{i\in \N}^*, E, \epsilon)$ be a spherically homogeneous rooted tree, as described in Section \ref{subsubsec:RootedTrees}. An automorphism of $T$ is simply an automorphism of graph preserving the root. The group of all automorphisms of $T$ will be denoted by $\Aut(T)$.

Notice that an automorphism of $T$ preserves the length of the vertices and the partial order. It follows that there is a well-defined map
\[\varphi_v\colon \Aut(T) \longrightarrow \Aut(T_{|v|})\]
such that for every $f\in \Aut(T)$ and $w\in T_{|v|}$, we have
\[f(vw) = f(v)\varphi_v(f)(w).\]
Note that this map is in general not a group homomorphism. Indeed, for $g_1,g_2\in \Aut(T)$ and $v\in T$, we have
\[\varphi_v(g_1g_2) = \varphi_{g_2v}(g_1)\varphi_v(g_2).\]
From the above formula, however, it follows that it does become a homomorphism when restricted to the stabiliser of the vertex $v$. From the definition, we also see easily that if $v\in T$ and $w\in T_{|v|}$, then $\varphi_{vw} = \varphi_w \circ \varphi_v$.

For $f\in \Aut(T)$ and $v\in T$, the automorphism $\varphi_v(f)$ is often called the \emph{section of $f$ at $v$}. For this reason, we will call the map $\varphi_v$ the \emph{section map at $v$}.

\subsubsection{Subgroups of automorphisms of rooted trees}

Let $G\leq \Aut(T)$ be a group of automorphisms of the rooted tree $T$. We can define many subgroups of $G$ related to its action on $T$, which we do below.

For $v\in T$, we will denote by $\St_G(v)$ the stabiliser of $v$ in $G$. For $n\in \N$, we will denote by $\St_G(n) = \bigcap_{v\in \level{n}}\St_G(v)$ the subgroup of $G$ fixing every vertex of the $n$\textsuperscript{th} level of $T$. We will call this subgroup the \emph{stabiliser of the $n$\textsuperscript{th} level}, or the \emph{$n$\textsuperscript{th}-level stabiliser}.

For $v\in T$, the \emph{rigid stabiliser of $v$ in $G$}, denoted by $\rist_G(v)$, is the subgroup
\[\rist_G(v) = \left\{g\in G \mid gw=w \quad \forall w\notin T_v \right\}.\]
In other words, the rigid stabiliser of a vertex $v$ is the subgroup of all elements of $G$ whose support is contained in the rooted subtree $T_v$. In particular, notice that $\rist_G(v) \leq \St_G(|v|) \leq \St_G(v)$.

For $n\in \N$, the \emph{rigid stabiliser of the $n$\textsuperscript{th} level}, denoted by $\rist_G(n)$, is the subgroup of $G$ generated by the rigid stabilisers of all vertices on the $n$\textsuperscript{th} level. Notice that, as elements in the rigid stabilisers of two different vertices on the same level have disjoint support, they commute, and thus we have
\[\rist_G(n) \cong \prod_{v\in \level{n}}\rist_G(v).\]

\subsubsection{Branch groups and their properties}

We are now almost ready to define branch groups, which we do immediately after the following definition.

\begin{defn}
Let $T$ be a spherically homogeneous tree, as defined in Section \ref{subsubsec:RootedTrees}, and let $G\leq \Aut(T)$ be a group of automorphisms of $T$. Then, $G$ is said to act \emph{spherically transitively} if the action of $G$ on $\level{n}$ is transitive for every $n\in \N$.
\end{defn}

\begin{defn}\label{defn:BranchGroups}
Let $T$ be a spherically homogeneous rooted tree as defined in Section \ref{subsubsec:RootedTrees}, and let $G\leq \Aut(T)$ be a group of automorphisms of $T$. Then, $G$ is a \emph{branch group} if
\begin{enumerate}[label=(\roman*)]
\item $G$ acts spherically transitively on $T$,
\item \label{item:defBranchRistFiniteIndex} for every $n\in \N$, the subgroup $\rist_G(n)$ is of finite index in $G$.
\end{enumerate}
If, instead of \ref{item:defBranchRistFiniteIndex}, the group $G$ satisfies the weaker condition
\begin{enumerate}
\item[(ii)'] for every $n\in \N$, the subgroup $\rist_G(n)$ is non-trivial,
\end{enumerate}
then $G$ is said to be \emph{weakly branch}.
\end{defn}

In the rest of this section, we collect a few properties of branch groups that we will need later on. We begin by the following two lemmas, which are valid for any spherically transitive group.

\begin{lemma}\label{lemma:SectionsSphericallyTransitiveAreSphericallyTransitive}
Let $T$ be a spherically homogeneous rooted tree and let $G\leq \Aut(T)$ be a spherically transitive group of automorphisms of $T$. Then, for any $v\in T$, the group $G_v = \varphi_v(\St_G(v))$ is a spherically transitive group of automorphisms of $T_{|v|}$.
\end{lemma}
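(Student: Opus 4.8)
The plan is to use the spherical transitivity of $G$ on $T$ directly in order to produce, for any two vertices on a given level of $T_{|v|}$, an element of $G_v$ carrying one to the other. First I would record that $G_v$ is genuinely a subgroup of $\Aut(T_{|v|})$: since the restriction of $\varphi_v$ to $\St_G(v)$ is a group homomorphism (as observed after the definition of the section map), its image $G_v = \varphi_v(\St_G(v))$ is a subgroup.

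To establish spherical transitivity, fix $n\in\N$ and let $x,y\in\level{n}$ be two vertices of $T_{|v|}$. Under the canonical identification of $T_{|v|}$ with the rooted subtree $T_v$, these correspond to the vertices $vx$ and $vy$ of $T$, both lying on level $|v|+n$. Since $G$ acts spherically transitively on $T$, there exists $g\in G$ with $g(vx)=vy$.

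The key observation is that any such $g$ automatically stabilises $v$. Indeed, $v$ is an ancestor of $vx$, so, as automorphisms of $T$ preserve both the length of vertices and the partial order, $g(v)$ is an ancestor of $g(vx)=vy$ of length $|v|$. But $v$ is the unique such ancestor, being the prefix of $vy$ of length $|v|$; hence $g(v)=v$, that is, $g\in\St_G(v)$. Applying the defining formula for the section map gives $g(vx)=g(v)\,\varphi_v(g)(x)=v\,\varphi_v(g)(x)$, and comparing with $g(vx)=vy$ yields $\varphi_v(g)(x)=y$. Since $\varphi_v(g)\in G_v$, this shows that $G_v$ acts transitively on $\level{n}$; as $n$ was arbitrary, $G_v$ is spherically transitive.

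I do not expect a serious obstacle here. The only point requiring care is keeping the two identifications straight, namely the isomorphism $T_v\cong T_{|v|}$ used to move between descendants of $v$ and vertices of $T_{|v|}$, together with the twisting behaviour of the section map; but once the observation that $g$ must fix $v$ is in place, the formula $f(vw)=f(v)\varphi_v(f)(w)$ does the rest of the work.
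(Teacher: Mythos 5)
Your proof is correct and follows the same route as the paper's, which simply asserts that $\St_G(v)$ acts spherically transitively on $T_v$ and then pushes this through $\varphi_v$; you have merely filled in the justification for that assertion (namely, that any $g\in G$ carrying one descendant of $v$ to another must fix $v$, since $v$ is the unique length-$|v|$ ancestor of both). No gaps.
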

\begin{proof}
If $G$ acts spherically transitively on $T$, then $\St_G(v)$ must act spherically transitively on $T_v$, from which it follows that $G_v$ acts transitively on $T_{|v|}$.
\end{proof}

\begin{lemma}\label{lemma:TransitiveActionFiniteIndexBranch}
Let $T$ be a spherically homogeneous rooted tree and let $G\leq \Aut(T)$ be a spherically transitive group of automorphisms of $T$. Let $H\leq G$ be a subgroup of finite index of $G$. Then, there exists $N\in \N$ such that $\St_H(v)$ acts spherically transitively on $T_v$ for all $v\in \level{N}$.
\end{lemma}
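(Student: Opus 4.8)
The plan is to track, as a function of the level $n$, the number of orbits of $H$ acting on $\level{n}$, and to exploit the fact that this quantity is bounded and non-decreasing, hence eventually constant.

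Write $d=[G:H]$. First I would observe that the number of $H$-orbits on each level is at most $d$: since $G$ acts transitively on $\level{n}$, we may identify $\level{n}$ with $G/\St_G(v)$ for any $v\in\level{n}$, and the $H$-orbits on this set are in bijection with the double cosets $H\backslash G/\St_G(v)$, of which there are at most $[G:H]=d$. Let $a_n$ denote the number of $H$-orbits on $\level{n}$, so that $a_n\le d$ for all $n$.

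Next I would set up the monotonicity. For $n\le m$, the iterated parent map $\pi_{m,n}\colon \level{m}\to\level{n}$, sending a vertex to its ancestor of length $n$, is surjective and $H$-equivariant, so it induces a surjection on orbit sets $\level{m}/H \twoheadrightarrow \level{n}/H$. Hence $a_n\le a_m$, so the sequence $(a_n)_n$ is non-decreasing; being bounded above by $d$, it is eventually constant, and there exists $N\in\N$ with $a_{N+m}=a_N$ for all $m\ge 0$. For such an $N$ and any $m\ge 0$, the induced surjection $\level{N+m}/H \twoheadrightarrow \level{N}/H$ is then a surjection between finite sets of equal cardinality, hence a bijection.

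The final step converts this into the desired spherical transitivity. Fix $v\in\level{N}$ and $m\ge 0$, and let $\mathcal{O}=Hv$ be its $H$-orbit. Because the induced map on orbit sets is a bijection, the preimage $\pi_{N+m,N}^{-1}(\mathcal{O})$, which is $H$-invariant and is precisely the union of those $H$-orbits of $\level{N+m}$ projecting onto $\mathcal{O}$, consists of a single $H$-orbit. In particular $H$ acts transitively on the set of vertices of $\level{N+m}$ whose ancestor of length $N$ lies in $\mathcal{O}$, and this set contains the descendants of $v$ at level $N+m$, namely $T_v\cap\level{N+m}$. Given two such descendants $u_1,u_2$ of $v$, transitivity furnishes $h\in H$ with $hu_1=u_2$; applying $\pi_{N+m,N}$ and using equivariance gives $hv=\pi_{N+m,N}(u_2)=v$, so $h\in\St_H(v)$. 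Thus $\St_H(v)$ acts transitively on $T_v\cap\level{N+m}$ for every $m$, which is exactly the assertion that $\St_H(v)$ acts spherically transitively on $T_v$. The only genuinely delicate point, and the step I expect to require the most care, is this last one: transitivity of $H$ on the whole fibre $\pi_{N+m,N}^{-1}(\mathcal{O})$ is a priori weaker than what is wanted, and it is precisely the equivariance of the parent map that allows one to promote an element of $H$ identifying two descendants of $v$ into an element of the stabiliser $\St_H(v)$. Everything else is a bounded-monotone-sequence argument.
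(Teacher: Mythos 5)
Your proposal is correct and follows essentially the same route as the paper: both arguments track the number of $H$-orbits on $\level{n}$, note it is non-decreasing in $n$ and bounded by $[G:H]$ (hence eventually constant at some level $N$), and then convert the resulting bijection on orbit sets into transitivity of $\St_H(v)$ on each level of $T_v$. Your treatment of the final step (using equivariance of the ancestor map to promote $h\in H$ with $hu_1=u_2$ into an element of $\St_H(v)$) is the same argument the paper states more briefly.
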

\begin{proof}
For each $n\in \N$, let $l(n)$ denote the number of orbits of the action of $H$ on $\level{n}$. Notice that if two vertices on a given level are not in the same orbit, then their descendants must also be in different orbits. It follows that the sequence $(l(n))_{n\in \N}$ is non-decreasing. On the other hand, since $G$ acts transitively on $\level{n}$ for all $n\in \N$, the number of orbits in $\level{n}$ under the action of $H$ is bounded from above by the index of $H$ in $G$. Therefore, the sequence $(l(n))_{n\in \N}$ must eventually stabilise.

Let $N\in \N$ be such that $l(n)=l(N)$ for all $n\geq N$. For every $n\geq N$, one can partition $\level{n}$ into $l(N)$ sets by saying that two vertices belong to the same set if their prefixes of length $N$ are in the same $H$-orbit. These sets are obviously $H$-invariant, and since $l(n)=l(N)$, it follows that $H$ must act transitively on each of these sets. In particular, two vertices with the same prefix of length $N$ must belong to the same $H$-orbit. It follows that $\St_H(v)$  acts spherically transitively on $T_v$ for all $v\in \level{N}$.
\end{proof}

Note that one could formulate a similar version of the previous lemma in the more general context of groups with minimal actions by homeomorphisms on a topological space, but we will not require such generality here.

In the following lemma, we observe that for branch or weakly branch groups, the section map again yields a branch or weakly branch group.

\begin{lemma}\label{lemma:ProjectionOfRistIsInRist}
Let $G$ be a group of automorphisms of a spherically homogeneous rooted tree $T$ and let $v\in T$ be any vertex. Then, for any $n\geq |v|$, we have
\[\varphi_v(\rist_G(n))\leq \rist_{G_v}(n-|v|)\]
where $G_v=\varphi_v(\St_G(v))\leq \Aut(T_{|v|})$. In particular, if $G$ is a weakly branch group, then $G_v$ is also a weakly branch group, and if $G$ is a branch group, then so is $G_v$.
\end{lemma}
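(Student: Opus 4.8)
The plan is to prove the inclusion $\varphi_v(\rist_G(n)) \leq \rist_{G_v}(n-|v|)$ by analysing generators, and then deduce the branch and weakly branch statements by combining this inclusion with Lemma \ref{lemma:SectionsSphericallyTransitiveAreSphericallyTransitive}. First I would recall that $\rist_G(n)$ is generated by the subgroups $\rist_G(w)$ as $w$ ranges over $\level{n}$, so by the homomorphism property of $\varphi_v$ on $\St_G(v)$ it suffices to understand the image of each such generator. Since $n \geq |v|$, every $w \in \level{n}$ is either a descendant of $v$ (i.e.\ $v \leq w$) or lies in a subtree disjoint from $T_v$. The key dichotomy is that $\rist_G(w) \leq \St_G(v)$ for \emph{every} such $w$: if $w$ is a descendant of $v$ then elements of $\rist_G(w)$ have support inside $T_w \subseteq T_v$, hence certainly fix $v$; if $w$ is not a descendant of $v$, then $v \notin T_w$, so again $v$ is fixed. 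This lets me apply $\varphi_v$ as a genuine homomorphism on each generator.

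Next I would compute $\varphi_v(\rist_G(w))$ in each case. If $v \not\leq w$, then $T_w$ is disjoint from $T_v$, so any $g \in \rist_G(w)$ acts trivially on $T_v$, whence $\varphi_v(g)$ is trivial and contributes nothing. If $v \leq w$, write $w = vu$ with $u \in \level{n-|v|}$ of $T_{|v|}$; then for $g \in \rist_G(w) \leq \St_G(v)$, the support of $g$ lies in $T_w = T_{vu}$, and I would check directly from the defining relation $f(vw')=f(v)\varphi_v(f)(w')$ that $\varphi_v(g)$ has support contained in the subtree of $T_{|v|}$ rooted at $u$. Hence $\varphi_v(g) \in \rist_{G_v}(u)$. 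Since $G_v = \varphi_v(\St_G(v))$ by definition, this element genuinely lies in $G_v$, and therefore in $\rist_{G_v}(n-|v|)$. Combining over all generators $w \in \level{n}$ gives the desired inclusion $\varphi_v(\rist_G(n)) \leq \rist_{G_v}(n-|v|)$.

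Finally I would deduce the structural consequences. By Lemma \ref{lemma:SectionsSphericallyTransitiveAreSphericallyTransitive}, $G_v$ acts spherically transitively on $T_{|v|}$, so the transitivity axiom holds for $G_v$ in both cases. For the weakly branch case, I need $\rist_{G_v}(m)$ nontrivial for every $m$; taking $n = m + |v|$ and using spherical transitivity of $G$ to find a generator $\rist_G(w)$ with $v \leq w$ whose nontrivial section survives, the inclusion above together with nontriviality of $\rist_G(n)$ should force $\rist_{G_v}(m) \neq 1$. For the branch case I additionally need $\rist_{G_v}(m)$ to have finite index in $G_v$; here I would use that $\rist_G(n)$ has finite index in $G$, intersect with $\St_G(v)$ (still finite index), and push forward through the homomorphism $\varphi_v|_{\St_G(v)}$, noting that $\varphi_v(\St_G(v)) = G_v$ so that finite index is preserved under this surjection.

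I expect the main obstacle to be the branch (finite-index) direction rather than the weakly branch direction. The inclusion I prove only goes one way, so finite index of $\rist_{G_v}(n-|v|)$ in $G_v$ does not follow formally from finite index of $\rist_G(n)$ in $G$; I must instead argue that the image of the finite-index subgroup $\rist_G(n) \cap \St_G(v)$ under the surjection $\varphi_v \colon \St_G(v) \twoheadrightarrow G_v$ is contained in $\rist_{G_v}(n-|v|)$ and has finite index in $G_v$. The delicate point is ensuring the index bound survives the projection, which relies on the surjectivity of $\varphi_v$ onto $G_v$ and a careful accounting that the image of a finite-index subgroup under a surjective homomorphism again has finite index.
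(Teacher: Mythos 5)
Your proposal is correct and follows essentially the same route as the paper: decompose $\rist_G(n)$ into the rigid stabilisers $\rist_G(w)$ for $w\in\level{n}$, observe that the sections at $v$ vanish for $w\notin T_v$ and land in $\rist_{G_v}(u)$ for $w=vu$, and then deduce the weakly branch and branch cases from non-triviality, respectively finite index, of $\varphi_v(\rist_G(n))$ in $G_v$. The finite-index step you flag as delicate is handled in the paper exactly as you describe, by noting $\rist_G(n)\leq\St_G(v)$ has finite index there and pushing forward through the surjection $\varphi_v\colon\St_G(v)\to G_v$.
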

\begin{proof}
Let $v$ and $n$ be as above, and let $g\in \rist_G(n)$ be an arbitrary element. By definition, we can write
\[g=\prod_{w\in \level{n}}g_w\]
with $g_w\in \rist_G(w)$.

Since $n\geq |v|$, we have $g_w\in \St_G(v)$ for all $w\in \level{n}$. Therefore, we have
\[\varphi_v(g) = \prod_{w\in \level{n}}\varphi_v(g_w) = \prod_{vu\in \level{n}}\varphi_v(g_{vu}),\]
where the last equality follows from the fact that $\varphi_v(g_w)=1$ if $w\notin T_v$. For every $vu\in \level{n}$, we have $\varphi_v(g_{vu}) \in \rist_{G_v}(u)$. Indeed, let $w\in T_{|v|}$ be such that $w\notin T_u$. Then, $vw\notin T_{vu}$, which implies that $g_{vu}vw = vw$, since $g_{vu}\in \rist_G(vu)$, and thus $\varphi_v(g_{vu})w = w$, which shows that $\varphi_v(g_{vu})\in \rist_{G_v}(u)$. We deduce that $\varphi_v(\rist_G(n))\leq \rist_{G_v}(n-|v|)$. 

For the second part of the lemma, let us first notice that if $G$ acts spherically transitively on $T$, then $G_v$ acts spherically transitively on $T_{|v|}$ by Lemma \ref{lemma:SectionsSphericallyTransitiveAreSphericallyTransitive}. Thus, we only need to check condition $(ii)$ or $(ii)'$ from Definition \ref{defn:BranchGroups}.

If $G$ is a weakly branch group, then $\rist_G(w)$ is non-trivial for every $w\in T$, and it is not difficult to see that this implies that $\varphi_v(\rist_G(n))$ is non-trivial for any $n\geq |v|$. By the above, we conclude that $G_v$ is a weakly branch group. If $G$ is a branch group, then $\rist_G(n)$ is of finite index in $\St_G(v)$ for every $n\geq |v|$, from which we conclude that $\varphi_v(\rist_G(n))$ is of finite index in $G_v$, and thus it follows from the above that $G_v$ is also a branch group.
\end{proof}

The next Lemma is well-known and has appeared in various forms in several places. In the context of branch groups, it was first shown by Grigorchuk \cite{Grigorchuk00}. We present here a version that is more general than the one given by Grigorchuk, but the proof is essentially the same.
\begin{lemma}\label{lemma:SubnormalSubgroupsContainRist}
Let $G$ be a group of automorphisms of a rooted tree $T$, and let $H\leq G$ be a non-trivial subgroup of $G$. Let $v\in T$ be a vertex such that $Hv\ne \{v\}$ and let $R\leq \rist_G(v)$ be a subgroup of $\rist_G(v)$ normalising $H$. Then, $R'\leq H$, where $R'$ denotes the derived subgroup (also known as the commutator subgroup) of $R$. In particular, if $H$ is normal in $G$ and if $G$ acts spherically transitively on $T$, then $\rist_G'(|v|)\leq H$.
\end{lemma}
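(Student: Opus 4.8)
The plan is to establish the main assertion $R'\le H$ first, by the standard commutator trick exploiting disjointness of supports, and then to deduce the ``in particular'' clause by applying that assertion at every vertex of level $|v|$. First I would fix an element $h\in H$ with $w:=hv\ne v$, which exists precisely because $Hv\ne\{v\}$. Since $R$ normalises $H$, the subgroup $H$ is normal in $\langle R,H\rangle$, so there is a well-defined quotient homomorphism $\pi\colon \langle R,H\rangle \to \langle R,H\rangle / H$ with $\ker\pi = H$. My goal then reduces to showing that $\pi(R)$ is abelian, for this gives $R' = [R,R]\le \ker\pi = H$.

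The heart of the argument is the observation that, for arbitrary $r_1,r_2\in R$, the elements $r_1$ and $hr_2h^{-1}$ commute in $G$. Indeed, $r_1\in\rist_G(v)$ has support contained in $T_v$, while $hr_2h^{-1}$ has support contained in $hT_v = T_{hv} = T_w$; since $v$ and $w$ are distinct vertices on the same level (automorphisms preserve levels), the subtrees $T_v$ and $T_w$ are disjoint, and automorphisms with disjoint supports commute. Hence $[r_1, hr_2h^{-1}] = 1$. Applying $\pi$ and using $\pi(h)=1$, I obtain
\[ 1 = \pi\big([r_1, hr_2h^{-1}]\big) = [\pi(r_1), \pi(hr_2h^{-1})] = [\pi(r_1), \pi(r_2)]. \]
As $r_1,r_2$ were arbitrary, $\pi(R)$ is abelian, and therefore $R'\le H$, as desired.

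For the ``in particular'' clause, I would argue that the hypotheses force $Hu\ne\{u\}$ for \emph{every} $u\in\level{|v|}$. Given such a $u$, spherical transitivity yields $g\in G$ with $gv=u$; then $ghg^{-1}\in H$ (by normality of $H$) satisfies $(ghg^{-1})u = g(hv) \ne gv = u$ since $hv\ne v$, so $Hu\ne\{u\}$. As $H\trianglelefteq G$, the subgroup $\rist_G(u)$ normalises $H$, so the main assertion applies with $R=\rist_G(u)$ and gives $\rist_G'(u)\le H$. Finally, using the internal direct product decomposition $\rist_G(|v|)\cong \prod_{u\in\level{|v|}}\rist_G(u)$ over the finite level $\level{|v|}$, the commutator subgroup of a (finite) direct product is the direct product of the commutator subgroups, so $\rist_G'(|v|) = \prod_{u\in\level{|v|}}\rist_G'(u) \le H$.

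I expect the only genuinely delicate point to be the support bookkeeping in the second paragraph---namely verifying that conjugation by $h$ carries $\rist_G(v)$ into the rigid stabiliser of the disjoint subtree $T_w$---but this is routine once one records that tree automorphisms permute the vertices of each level. Everything else, including the passage to the quotient and the product decomposition of $\rist_G'(|v|)$, is formal.
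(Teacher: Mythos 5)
Your proof is correct and is essentially the paper's argument: the paper establishes $R'\le H$ via the identity $[[r_1,h],r_2]=[r_1,r_2]$, which holds because $hr_1^{-1}h^{-1}\in\rist_G(hv)$ commutes with $r_2\in\rist_G(v)$ by disjointness of supports, and your passage to the quotient $\langle R,H\rangle/H$ is just a clean repackaging of that same computation. Your handling of the ``in particular'' clause (re-verifying $Hu\ne\{u\}$ at each $u\in\level{|v|}$ and using $\rist_G'(|v|)=\prod_u\rist_G'(u)$) is a minor variant of the paper's direct conjugation of the conclusion, and is equally valid.
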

\begin{proof}
Let $h\in H$ be such that $hv\ne v$. For every $r\in R$, we have $hrh^{-1}\in \rist_G(hv)$, and this element must commute with every element of $R$, since $hv\ne v$. Thus, using the fact that $R$ normalises $H$, for every $r_1, r_2\in R$ we have
\begin{align*}
H\ni [[r_1,h],r_2] &= [r_1hr_1^{-1}h^{-1}, r_2] \\
&= [r_1,r_2].
\end{align*}
We conclude that $R'\leq H$.

If $H$ is a normal subgroup of $G$, we can take $R=\rist_G(v)$, so that $\rist_G'(v)\leq H$. If $G$ acts transitively on $\level{|v|}$, we conclude, using the fact that $H$ is normal, that $\rist_G'(w)\leq H$ for all $w\in \level{|v|}$, which implies that $\rist_G'(|v|)\leq H$.
\end{proof}

The previous lemma is more interesting if we know that the derived subgroup of a rigid stabiliser is non-trivial. This is always the case for weakly branch groups, as the following lemma shows.

\begin{lemma}\label{lemma:RistDerivedNonTrivial}
Let $G$ be a weakly branch group acting on a rooted tree $T$. For any $v\in T$ and any $k\in \N$, the subgroup $\rist_G^{(k)}(v)$ is non-trivial, where $\rist_G^{(0)}(v)=\rist_G(v)$ and $\rist_G^{(k+1)}(v) = [\rist_G^{(k)}(v), \rist_G^{(k)}(v)]$.
\end{lemma}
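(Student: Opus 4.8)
The plan is to induct on $k$, keeping the statement universally quantified over all vertices, so that within the inductive step the hypothesis may be invoked at more than one vertex. The base case $k=0$ is essentially the weakly branch hypothesis: since $G$ acts spherically transitively and $\rist_G(n)\ne 1$ for all $n$, the decomposition $\rist_G(n)\cong \prod_{w\in\level{n}}\rist_G(w)$ forces $\rist_G(w)\ne 1$ for some $w\in\level{n}$, and conjugating by elements realising the transitive action on $\level{n}$ spreads this to $\rist_G(u)\ne 1$ for \emph{every} vertex $u$. For the inductive step, assuming $\rist_G^{(k)}(u)\ne 1$ for all $u$, it suffices to show that $\rist_G^{(k)}(v)$ is non-abelian for each $v$, since then $\rist_G^{(k+1)}(v)=[\rist_G^{(k)}(v),\rist_G^{(k)}(v)]\ne 1$.

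To exhibit two non-commuting elements of $\rist_G^{(k)}(v)$, first pick $a\in\rist_G^{(k)}(v)$ with $a\ne 1$, using the inductive hypothesis at $v$. Let $p$ be a vertex of minimal length moved by $a$. Its parent $p^-$ is then fixed by $a$, so $a$ permutes the children of $p^-$ and carries $p$ to a \emph{distinct} child $q=ap$ of $p^-$; in particular the subtrees $T_p$ and $T_q$ are disjoint. Since $a\in\rist_G(v)$ fixes $v$, the moved vertex $p$ is a proper descendant of $v$, so $T_p\subseteq T_v$ and hence $\rist_G(p)\leq\rist_G(v)$. As the iterated derived subgroup is monotone under inclusion, this gives $\rist_G^{(k)}(p)\leq\rist_G^{(k)}(v)$. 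Now apply the inductive hypothesis at the vertex $p$ to choose $b\in\rist_G^{(k)}(p)$ with $b\ne 1$; this $b$ has support contained in $T_p$ and lies in $\rist_G^{(k)}(v)$.

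It remains to check that $a$ and $b$ do not commute. The conjugate $aba^{-1}$ has support contained in $a(T_p)=T_q$, which is disjoint from the support $T_p$ of $b$. Hence $aba^{-1}$ and $b^{-1}$ have disjoint supports, so their product $[a,b]=(aba^{-1})b^{-1}$ is non-trivial, being a product of two non-trivial elements supported on disjoint subtrees. Therefore $[a,b]$ is a non-trivial element of $\rist_G^{(k+1)}(v)$, completing the induction. The one genuinely delicate point is the minimality argument guaranteeing that $p$ and $ap$ are distinct children of a common \emph{fixed} parent: this is exactly what ensures $T_p$ and $T_q$ are disjoint and thus that the commutator cannot collapse. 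The remaining ingredients are routine bookkeeping with supports and the monotonicity of the derived series.
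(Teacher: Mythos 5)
Your proof is correct and follows essentially the same strategy as the paper's: induct on $k$, pick a non-trivial $a\in\rist_G^{(k)}(v)$ moving some vertex $p\geq v$, invoke the inductive hypothesis at $p$ to get a non-trivial $b\in\rist_G^{(k)}(p)\leq\rist_G^{(k)}(v)$, and observe that $[a,b]=(aba^{-1})b^{-1}$ is a product of non-trivial elements supported on the disjoint subtrees $T_{ap}$ and $T_p$. The only cosmetic differences are that you spell out the base case via spherical transitivity and use a minimal moved vertex to see that $p$ and $ap$ are distinct siblings, whereas the paper simply notes that $p$ and $ap$ are distinct vertices on the same level and hence have disjoint subtrees.
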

\begin{proof}
We will proceed by induction of $k$. For $k=0$, the result follows directly from the definition of a weakly branch group. Let us now suppose that for some $k\in \N$, we have $\rist_G^{(k)}(v)\ne 1$ for all $v\in T$, and let us show that the same statement holds for $k+1$.

Let us fix $v\in T$. Since $\rist_G^{(k)}(v)\ne 1$, there exist $w\in T$ and $r\in \rist_G^{(k)}(v)$ such that $rw\ne w$. Since $w$ is moved by an element of $\rist_G^{(k)}(v)\leq \rist_G(v)$, we must have $v\leq w$. Therefore, we have $\rist_G(w)\leq \rist_G(v)$, which implies that $\rist_G^{(k)}(w) \leq \rist_G^{(k)}(v)$.

By the induction hypothesis, we know that $\rist_G^{(k)}(w)\ne 1$, so there exists $h\in \rist_G^{(k)}(w)$ with $h\ne 1$. Since $\rist_G^{(k)}(w)\leq \rist_G^{(k)}(v)$, we have $[r,h]\in \rist_G^{(k+1)}(v)$. However, notice that $[r,h]=(rhr^{-1})h^{-1}$ with $rhr^{-1}\in \rist_G^{(k)}(rw)$ and $h^{-1}\in \rist_G^{(k)}(w)$ both non-trivial. Using the fact that $\rist_G^{(k)}(rw)\cap\rist_G^{(k)}(w) = \{1\}$, since $rw\ne w$, we conclude that $[r,h]\ne 1$. Thus, $\rist_G^{(k+1)}(v)\ne 1$, which concludes the proof.
\end{proof}

As an easy consequence of the previous lemmas, we obtain the following technical results, which will be useful later on.

\begin{lemma}\label{lemma:CanFindRistInMicroSuppSubgroup}
Let $G$ be a weakly branch group acting on a rooted tree $T$, let $H\leq G$ be a subgroup of finite index of $G$ and let $K\trianglelefteq H$ be a normal subgroup of $H$ such that $\rist_K(v)\ne 1$ for all $v\in T$. Then, there exists $n\in \N$ such that $\rist_G'(n) \leq K$.
\end{lemma}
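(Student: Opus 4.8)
The plan is to produce a nontrivial subgroup of $K$ that is \emph{normal in $G$}, after which the conclusion follows immediately from the ``in particular'' clause of Lemma \ref{lemma:SubnormalSubgroupsContainRist}: if $1\neq N\trianglelefteq G$ then $N$ moves some vertex $v_0$, and since $G$ acts spherically transitively, that clause yields $\rist_G'(|v_0|)\leq N\leq K$, so $n=|v_0|$ works. Everything therefore reduces to locating such an $N$ inside $K$.

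First I would push commutators of rigid stabilisers of $H$ into $K$. Since $[G:H]<\infty$, Lemma \ref{lemma:TransitiveActionFiniteIndexBranch} provides a level $N$ such that $\St_H(v)$ acts spherically transitively on $T_v$ for every $v\in\level N$. Fixing such a $v$, the subgroup $K\cap \St_H(v)$ is normal in $\St_H(v)$ (because $K\trianglelefteq H$), and it is nontrivial and moves a vertex of $T_v$, as it contains $\rist_K(v)\neq 1$. Applying the ``in particular'' clause of Lemma \ref{lemma:SubnormalSubgroupsContainRist} to this spherically transitive action on $T_v$, and using that for a proper descendant $u$ of $v$ one has $\rist_{\St_H(v)}(u)=\rist_H(u)$ (support in $T_u$ forces fixing $v$), I obtain a level below $v$ at which $\rist_H'(u)\leq K$. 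As $\level N$ is finite and deeper level rigid stabilisers are contained in shallower ones, taking a maximum over $v\in\level N$ yields a single $M$ with $\rist_H'(w)\leq K$ for all $w\in\level M$, that is, $\rist_H'(M)\leq K$.

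Next I would replace $H$ by its normal core $H_0=\bigcap_{g\in G}gHg^{-1}$, a finite-index normal subgroup of $G$ contained in $H$. Then $\rist_{H_0}(M)\leq \rist_H(M)$ gives $\rist_{H_0}'(M)\leq \rist_H'(M)\leq K$, while $H_0\trianglelefteq G$ makes $\rist_{H_0}(M)$, and hence $\rist_{H_0}'(M)$, normal in $G$. It then remains to verify $\rist_{H_0}'(M)\neq 1$. For each vertex $v$, the group $\rist_G(v)$ is infinite, since it contains the commuting nontrivial subgroups $\rist_G(w)$ for the arbitrarily many descendants $w$ of $v$ at any fixed deeper level; hence its finite-index subgroup $\rist_{H_0}(v)=H_0\cap\rist_G(v)$ is nontrivial for every $v$. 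The commutator argument from the proof of Lemma \ref{lemma:RistDerivedNonTrivial}, which uses only the non-triviality of rigid stabilisers and not spherical transitivity, then shows $\rist_{H_0}'(v)\neq 1$ for every $v$, so that $\rist_{H_0}'(M)=\prod_{w\in\level M}\rist_{H_0}'(w)$ is nontrivial. Setting $N=\rist_{H_0}'(M)$ completes the reduction.

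I expect the passage from $H$ to $G$ to be the main obstacle. The subnormal-subgroup lemma naturally delivers elements of $K$ only out of rigid stabilisers of $H$ (equivalently of $N_G(K)$), since $K$ need not be normalised by $\rist_G(v)$; the real work is to manufacture from $\rist_H'(M)\leq K$ a subgroup that is genuinely normal in $G$ and, crucially, remains nontrivial. This is exactly where passing to the normal core, combined with the fact that every rigid stabiliser of a finite-index subgroup is nontrivial and non-abelian, does the job.
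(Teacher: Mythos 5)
Your proof is correct, and its skeleton matches the paper's: both use Lemma \ref{lemma:TransitiveActionFiniteIndexBranch} to find a level where the groups $\St_H(v)$ act spherically transitively on the subtrees $T_v$, then apply the commutator trick of Lemma \ref{lemma:SubnormalSubgroupsContainRist} at a vertex $u_v\in T_v$ moved by $\rist_K(v)$, spread the conclusion over a whole level by transitivity, and finish with a second application of Lemma \ref{lemma:SubnormalSubgroupsContainRist} to a nontrivial normal subgroup of $G$ contained in $K$. Where you genuinely differ is in how that normal subgroup is manufactured. The paper first arranges $\rist_G'(m)\leq H$, so that the normalising subgroup $R$ in Lemma \ref{lemma:SubnormalSubgroupsContainRist} can be taken to be $\rist_G'(u_v)$; the output is then $\rist_G^{(2)}(k)\leq K$, which is normal in $G$ for free and nontrivial by Lemma \ref{lemma:RistDerivedNonTrivial}. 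You instead take $R=\rist_H(u_v)$, which normalises $K\cap\St_H(v)$ with no preparation, obtaining $\rist_H'(M)\leq K$; the price is that this subgroup is a priori only normal in $H$, so you must pass to the normal core $H_0$ of $H$ and separately verify $\rist_{H_0}'(M)\neq 1$ --- which you do correctly, via the observation that rigid stabilisers of a weakly branch group are infinite, so their intersections with a finite-index subgroup are nontrivial, and the commutator argument of Lemma \ref{lemma:RistDerivedNonTrivial} only needs nontriviality of rigid stabilisers at every vertex. Both routes work: the paper's spends one extra derived subgroup of $\rist_G$ to get normality in $G$ automatically, while yours keeps the commutators inside $H$ at the cost of the normal-core and nontriviality arguments at the end.
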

\begin{proof}
Since $H$ is of finite index in $G$, it contains a non-trivial normal subgroup of finite index $N\trianglelefteq G$. By Lemma \ref{lemma:SubnormalSubgroupsContainRist}, there exists $m\in \N$ such that $\rist_G'(m)\leq N \leq H$. Using Lemma \ref{lemma:TransitiveActionFiniteIndexBranch} and the fact that $\rist_G'(m')\leq \rist_G'(m)$ for all $m'\geq m$, we may assume without loss of generality, replacing $m$ by a bigger number if necessary, that $\St_H(v)$ acts spherically transitively on $T_v$ for all $v\in \level{m}$.

For every $v\in \level{m}$, we have $\rist_K(v)\ne 1$. Therefore, there exists $u_v\in T_v$ such that $\rist_K(v)u_v\ne \{u_v\}$. Notice that if $\rist_K(v)u_v\ne \{u_v\}$, then $\rist_K(v)w\ne \{w\}$ for all $w\geq u_v$. Therefore, we may assume that there exists $k\in \N$ such that $|u_v|=k$ for all $v\in \level{m}$.

Since $\rist_G'(m)\leq H$, it normalises $K$, so by Lemma \ref{lemma:SubnormalSubgroupsContainRist}, for every $v\in \level{m}$, we have $\rist_G^{(2)}(u_v) \leq K$. By the spherical transitivity of the action of $H$ on each $T_v$, we conclude that $\rist_G^{(2)}(k)\leq K$. Since $\rist_G^{(2)}(k)$ is a normal subgroup of $G$, by applying Lemma \ref{lemma:SubnormalSubgroupsContainRist}, we obtain some $n\in \N$ such that $\rist_G'(n)\leq \rist_G^{(2)}(k)\leq K$.
\end{proof}

\begin{lemma}\label{lemma:CommutingSubgroupsDisjointSupport}
Let $G$ be a weakly branch group acting on a rooted tree $T$, let $H\leq G$ be a subgroup of finite index of $G$, and let $K_1,K_2\leq H$ be two subnormal subgroups of $H$. If $[K_1,K_2]=1$, then $K_1$ and $K_2$ have disjoint support.
\end{lemma}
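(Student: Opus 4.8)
The plan is to argue by contradiction: assuming $K_1$ and $K_2$ have overlapping support, I will locate a vertex $w$ at which $K_1$ has a nontrivial rigid stabiliser while $w$ is moved by $K_2$, and then derive a contradiction from $[K_1,K_2]=1$. The endgame is short, so let me record it first. Suppose $g\in \rist_{K_1}(w)$ is nontrivial, so $\operatorname{supp}(g)\subseteq T_w$, and suppose $k\in K_2$ satisfies $kw\ne w$. Since $kw$ and $w$ lie on the same level and are distinct, the subtrees $T_w$ and $T_{kw}$ are disjoint. But $[K_1,K_2]=1$ forces $kgk^{-1}=g$, while $\operatorname{supp}(kgk^{-1})=k\cdot\operatorname{supp}(g)\subseteq T_{kw}$; hence $\operatorname{supp}(g)\subseteq T_w\cap T_{kw}=\emptyset$, so $g=1$, a contradiction. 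For this step I use only that $K_2$ moves $w$ and commutes with $K_1$; all the real work lies in producing $w$.

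To set up, I would note that if some vertex $u$ lies in both supports, then in fact the whole subtree $T_u$ lies in both supports: an automorphism moving $u$ sends $T_u$ onto a disjoint subtree and therefore moves every descendant of $u$. Assuming the supports meet (so both $K_i$ are nontrivial), I may thus fix $u\in \operatorname{supp}(K_1)\cap\operatorname{supp}(K_2)$ and conclude that every vertex of $T_u$ is moved by both $K_1$ and $K_2$. In particular, every term of a subnormal series for $K_1$ moves every vertex of $T_u$, since each such term contains $K_1$.

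The heart of the proof is to show that $K_1$ has a nontrivial rigid stabiliser at some sufficiently deep $w\in T_u$, and for this I would descend along a subnormal series $K_1=N_0\trianglelefteq N_1\trianglelefteq\dots\trianglelefteq N_r=H$. For the base case, since $H$ has finite index in the infinite group $G$ it contains a nontrivial finite-index normal subgroup of $G$, so Lemma \ref{lemma:SubnormalSubgroupsContainRist} (spherical transitivity holds as $G$ is weakly branch) yields a level $m$ with $\rist_G'(v)\le H$ for all $v$ with $|v|\ge m$. Fix any $w\in T_u$ with $|w|\ge m$, so that $\rist_G^{(1)}(w)\le N_r$. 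For the inductive step, suppose $\rist_G^{(k)}(w)\le N_{i+1}$. As $\rist_G^{(k)}(w)\le\rist_G(w)$ lies in $N_{i+1}$ and $N_i\trianglelefteq N_{i+1}$, it normalises $N_i$; moreover $N_i$ moves $w$, because $w\in T_u$ and $N_i\supseteq K_1$. Applying Lemma \ref{lemma:SubnormalSubgroupsContainRist} with $R=\rist_G^{(k)}(w)$ gives $\rist_G^{(k+1)}(w)\le N_i$. After $r$ steps this produces $\rist_G^{(r+1)}(w)\le N_0=K_1$, and Lemma \ref{lemma:RistDerivedNonTrivial} guarantees $\rist_G^{(r+1)}(w)\ne 1$. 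Hence $\rist_{K_1}(w)\ne 1$, and since $w\in T_u\subseteq\operatorname{supp}(K_2)$ we are precisely in the situation of the endgame, which yields the contradiction.

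I expect the main obstacle to be exactly this middle step: passing from ``$K_1$ is nontrivial and subnormal'' to ``$K_1$ has a nontrivial rigid stabiliser inside the overlap region,'' since $K_1$ need not be normal in $G$ and a priori all of its rigid stabilisers could be trivial. The idea that resolves it is that moving the single vertex $u$ forces all of $T_u$ to be moved by every term of the series, so Lemma \ref{lemma:SubnormalSubgroupsContainRist} can be applied \emph{at the same vertex} $w$ at each stage of the descent; the only cost is replacing $\rist_G(w)$ by its $(r+1)$-st derived subgroup, which stays nontrivial by Lemma \ref{lemma:RistDerivedNonTrivial}. I do not expect to need the subnormality of $K_2$ for this argument.
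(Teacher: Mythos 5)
Your proof is correct and follows essentially the same strategy as the paper's: the key step in both is the inductive descent along a subnormal series at a fixed vertex in the overlap of the supports, applying Lemma \ref{lemma:SubnormalSubgroupsContainRist} at each stage and invoking Lemma \ref{lemma:RistDerivedNonTrivial} for non-triviality. The only (minor) divergence is the endgame: the paper runs the descent for both $K_1$ and $K_2$ and concludes from $\rist_G^{(k+2)}(v)=1$, whereas you run it only for $K_1$ and finish with a direct conjugation/disjoint-support argument, which, as you note, does not actually use the subnormality of $K_2$.
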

\begin{proof}
Since $H$ is of finite index in $G$, it contains a non-trivial normal subgroup, and it thus follows from Lemma \ref{lemma:SubnormalSubgroupsContainRist} that there exists $N\in \N$ such that $\rist_G'(N)\leq H$.

Let us fix $i\in \{1,2\}$, and let $k_i\in \N$ be such that $K_i$ is a $k_i$-subnormal subgroup of $H$. Let $v\in T$ be a vertex in the support of $K_i$, meaning that $K_i v \ne \{v\}$. Then, by inductively applying Lemma \ref{lemma:SubnormalSubgroupsContainRist}, we obtain that $\rist_G^{(k_i+1)}(v)\leq K_i$.

Therefore, if there existed $v\in T$ such that $v$ were in the support of both $K_1$ and $K_2$, we would have $\rist_G^{(k_1+1)}(v)\leq K_1$ and $\rist_G^{(k_2+1)}(v)\leq K_2$, and thus $\rist_G^{(k+1)}\leq K_i$ for $i=1,2$, where $k=\max\{k_1,k_2\}$. Since $[K_1,K_2]=1$, this would imply that $\rist_G^{(k+2)}(v)=1$, which is impossible according to Lemma \ref{lemma:RistDerivedNonTrivial}.
\end{proof}

We conclude this section by observing that every proper quotient of a branch group has the property $BF$ defined in Definition \ref{defn:PropertyBF}.

\begin{prop}\label{prop:QuotientsBranchGroupsBF}
Every proper quotient of a branch group $G$ has property $BF$.
\end{prop}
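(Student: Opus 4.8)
The plan is to show that every proper quotient of a branch group is virtually abelian, and then to verify that virtually abelian groups enjoy property $BF$. Let $N \trianglelefteq G$ be a non-trivial normal subgroup and write $Q = G/N$. Since $N$ is a non-trivial subgroup of $\Aut(T)$, it must move some vertex $v \in T$, so that $Nv \neq \{v\}$. As $G$ acts spherically transitively (Definition \ref{defn:BranchGroups}) and $N$ is normal, Lemma \ref{lemma:SubnormalSubgroupsContainRist} applies and yields $\rist_G'(m) \leq N$ with $m = |v|$.

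Next I would observe that $G/\rist_G'(m)$ is virtually abelian. Indeed, $\rist_G(m)$ is a normal subgroup of $G$ (conjugation permutes the rigid stabilisers of the vertices on level $m$), and it has finite index in $G$ because $G$ is a branch group. Its derived subgroup $\rist_G'(m)$ is likewise normal in $G$, and the quotient $\rist_G(m)/\rist_G'(m)$ is an abelian normal subgroup of finite index in $G/\rist_G'(m)$. Hence $G/\rist_G'(m)$ is virtually abelian. Since $\rist_G'(m) \leq N$, the group $Q = G/N$ is a quotient of $G/\rist_G'(m)$, and the property of being virtually abelian passes to quotients (the image of the abelian finite-index normal subgroup is again abelian, normal, and of finite index), so $Q$ is virtually abelian.

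It then remains to prove that a virtually abelian group $Q$ has property $BF$. Let $A \trianglelefteq Q$ be an abelian subgroup of finite index, and let $H \leq Q$ be a subgroup of finite bi-index; I must show $H$ has finite index. Applying Proposition \ref{prop:IntersectionFiniteBiIndex} with $K = A$ shows that $H \cap A$ has finite bi-index in $A$. In an abelian group every double coset is a single coset (as $HaH = aH$), so finite bi-index in $A$ is the same as finite index in $A$; thus $[A : H \cap A] < \infty$. Combining this with $[Q : A] < \infty$ gives $[Q : H \cap A] < \infty$, and \emph{a fortiori} $[Q : H] < \infty$, as desired. By Definition \ref{defn:PropertyBF}, $Q$ has property $BF$.

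The argument is essentially routine once the initial reduction is in place; the only real content is the first step, namely the observation (via Lemma \ref{lemma:SubnormalSubgroupsContainRist}) that every non-trivial normal subgroup of a branch group contains the derived subgroup $\rist_G'(m)$ of some level stabiliser, together with the fact that $\rist_G(m)/\rist_G'(m)$ is abelian of finite index. I expect no serious obstacle here; the only points demanding care are confirming that being virtually abelian is inherited by quotients and that property $BF$ for virtually abelian groups genuinely reduces, through Proposition \ref{prop:IntersectionFiniteBiIndex}, to the trivial abelian case where double cosets and cosets coincide.
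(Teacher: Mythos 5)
Your proof is correct and follows essentially the same route as the paper: reduce to showing that every proper quotient of a branch group is virtually abelian via Lemma \ref{lemma:SubnormalSubgroupsContainRist}, then deduce property $BF$ for virtually abelian groups using Proposition \ref{prop:IntersectionFiniteBiIndex} and the fact that double cosets coincide with cosets in an abelian group. You merely spell out in more detail the step the paper leaves implicit (why $G/\rist_G'(m)$ is virtually abelian and why this passes to the quotient $G/N$).
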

\begin{proof}
Let $N\trianglelefteq G$ be a non-trivial normal subgroup of $G$. Let $H\leq G/N$ be a subgroup of finite bi-index. We need to show that $H$ has finite index in $G/N$.

It follows from Lemma \ref{lemma:SubnormalSubgroupsContainRist} that $G/N$ is a virtually abelian group. Therefore, there exists a subgroup $K\leq G/N$ of finite index in $G/N$ such that $K$ is abelian. According to Proposition \ref{prop:IntersectionFiniteBiIndex}, the subgroup $H\cap K$ is of finite bi-index in $K$. Since $K$ is abelian, a subgroup of finite bi-index must obviously be a subgroup of finite index. Thus, we have that $H\cap K$ is of finite index in $K$, which is itself of finite index in $G/N$. It follows that $H$ is of finite index in $G/N$.
\end{proof}

\subsection{Transitivity degree of branch groups}

Before moving on with the proof of our main result on quasi-$2$-transitive actions of branch groups, let us first mention the following result about the transitivity degree of branch groups due to Adrien Le Boudec, which, as mentioned in the introduction, served as a motivation for the current work on the more subtle problem of the quasi-transitivity degree. While this result will not be used in proof of the main theorem and can be deduced from it, we believe that it is interesting to show here how it can be proved directly with different arguments.

\begin{prop}[Le Boudec]\label{prop:TransitivityDegree}
The transitivity degree of a branch group is always $1$.
\end{prop}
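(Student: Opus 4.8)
The plan is to show that a branch group $G$ acting $2$-transitively on an infinite set $X$ leads to a contradiction. Suppose such an action exists, and fix a point $x \in X$ with stabiliser $H = \St_G(x)$. Since the action is $2$-transitive, $H$ acts transitively on $X \setminus \{x\}$, which is infinite, so $H$ has infinite index in $G$. The first key observation is that $2$-transitivity forces $H$ to be a \emph{maximal} subgroup: if $H \leq L \leq G$ with $L \neq G$, then $L$ would be a proper block system refinement, but $2$-transitivity (indeed mere primitivity) of the action on $G/H$ implies $H$ is maximal. So the strategy reduces to understanding maximal subgroups of infinite index in a branch group, together with the extra rigidity supplied by $2$-transitivity.

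The heart of the argument should exploit the rigid stabilisers. The plan is to locate a level $n$ at which the rigid structure clashes with primitivity. Since the action on $G/H$ is primitive and faithful (we may quotient by the kernel, but a branch group is typically just-infinite, and in any case Proposition \ref{prop:QuotientsBranchGroupsBF} lets us assume faithfulness by passing to the relevant quotient if the kernel were nontrivial), the image of $\rist_G(v)$ for a vertex $v$ must either fix $x$ or move it. Using spherical transitivity and the finite-index property $\rist_G(n) \trianglelefteq_f G$, one argues that $\rist_G(v)$ for $v$ at a suitable level cannot be contained in $H$ for all $v$ (else $\rist_G(n) \leq H$, giving $H$ finite index, a contradiction), yet the vertices are permuted transitively by $G$. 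Here I would invoke Lemma \ref{lemma:SubnormalSubgroupsContainRist}: if the kernel of the action were nontrivial it would contain $\rist_G'(m)$ for some $m$, so faithfulness (or just-infiniteness) pins down the structure. The commutativity of distinct rigid stabilisers (their disjoint supports) should then be played against the $2$-transitivity, which is an extremely strong homogeneity condition.

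The cleanest route is likely the following double-counting or block argument. From $2$-transitivity, the point stabiliser $H$ acts transitively on $X \setminus \{x\}$; pick $y \neq x$ and set $H_y = \St_H(y) = \St_G(x) \cap \St_G(y)$. Two-transitivity means $G$ acts transitively on ordered pairs, so all such pairwise stabilisers are conjugate. The plan is to show that some rigid stabiliser $\rist_G(v)$ must fix both $x$ and $y$ — because a single vertex's rigid stabiliser, having a proper nontrivial normal-like behaviour in the section, cannot act $2$-transitively on the whole infinite orbit while the orbit of $v$ is being permuted. Concretely, I would look at the partition of $X$ induced by the $\rist_G(v)$-orbits as $v$ ranges over a level; spherical transitivity makes $G$ permute these pieces, and the finite number of vertices at each level contrasts with the infinitude of $X$ to produce a nontrivial $G$-invariant equivalence relation refining the trivial one, contradicting primitivity.

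The main obstacle I expect is controlling the kernel of the action and ensuring the rigid stabilisers act nontrivially on $X$ in a way compatible with the tree structure — essentially transferring information from the abstract $2$-transitive action on $X$ back to the geometric action on $T$. There is no a priori relationship between $X$ and $T$, so the whole difficulty is manufacturing one: one must use the internal subgroup lattice of $G$ (rigid stabilisers, their commutator subgroups, and Lemma \ref{lemma:CommutingSubgroupsDisjointSupport}) rather than any geometric feature of $X$. I anticipate that the shortest path, which the author attributes to Le Boudec, bypasses the full quasi-transitivity machinery and instead argues directly that a maximal subgroup arising as a $2$-transitive point stabiliser cannot coexist with the branch structure: the abundance of commuting normal-in-finite-index pieces ($\rist_G'(n)$) forces any primitive action to have finite degree. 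Making that forcing precise — identifying which $\rist_G'(n)$ lands inside $H$ and deriving that $H$ then has finite index — is the crux.
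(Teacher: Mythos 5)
Your proposal is a plan rather than a proof, and the step you yourself identify as ``the crux'' is exactly the one that is missing. Two concrete problems. First, the block argument you propose does not work as stated: the orbits of a single rigid stabiliser $\rist_G(v)$ on $X$ do not form a $G$-invariant partition, because $\rist_G(v)$ is not normal in $G$ (only the product $\rist_G(n)$ over a whole level is). If you instead take the orbits of the normal subgroup $\rist_G(n)$, then primitivity forces any non-trivial normal subgroup of a faithful primitive group to act transitively, so that partition is the trivial one-block partition and yields no contradiction; the finiteness of a level of $T$ places no bound whatsoever on the number of blocks of a partition of $X$. Second, reducing to ``the point stabiliser $H$ is maximal of infinite index'' cannot by itself be the end of the story: as the paper points out, branch groups can have (almost) maximal subgroups of infinite index, so maximality is compatible with the branch structure and some further input from $2$-transitivity is indispensable. (Also, the kernel discussion is a red herring: the transitivity degree is defined via \emph{faithful} actions, so faithfulness is part of the hypothesis.)

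The missing idea is the pair of facts on which the paper's proof rests. (a) A finite-index subgroup of an infinite group acting faithfully and $2$-transitively acts primitively (Dixon--Mortimer, Theorem 7.2D); apply this to $\rist_G(n)$ for a level $n$ containing at least three vertices. (b) A group acting faithfully and primitively cannot contain three distinct non-trivial pairwise commuting normal subgroups: each such subgroup is transitive, and given $g_1\in N_1$ one picks $g_2\in N_2$ with $g_2^{-1}g_1x=x$ and uses that $g_2^{-1}g_1$ commutes with the transitive subgroup $N_3$ to conclude that $g_2^{-1}g_1$ fixes every point, hence is trivial, giving $N_1=N_2$. Since $\rist_G(v_1),\rist_G(v_2),\rist_G(v_3)$ are three such subgroups of $\rist_G(n)$, this finishes the argument. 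Note that three subgroups are genuinely needed in (b): two commuting transitive normal subgroups can coexist in a faithful primitive group (left and right translations of a non-abelian simple group on itself), so your appeal to ``the commutativity of distinct rigid stabilisers'' with only two players in sight would not close the gap.
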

\begin{proof}
The result follows from the two following general facts.
\begin{enumerate}[label=(\roman*)]
\item If $G$ is an infinite group acting faithfully and $2$-transitively on a set $X$, and if $H\leq G$ is a subgroup of finite index, then $H$ acts primitively on $X$.\label{item:FiniteIndex2TransitiveIsPrimitive}
\item A group acting faithfully and primitively on a set $X$ cannot contain three distinct non-trivial commuting normal subgroups.\label{item:PrimitiveNoCommutingSubgroups}
\end{enumerate}
Recall than an action is primitive if it is transitive and does not preserve any non-trivial partition, where a partition is said to be trivial if either every element is in a different block or if all elements are in the same block.

Indeed, if $G$ is a branch group, then there exists some $n\in \N$ such that $\level{n}$ contains at least three different vertices $v_1,v_2,v_3\in \level{n}$, and thus $\rist_G(n)$, which is a subgroup of finite index of $G$, contains three distinct non-trivial commuting normal subgroups, namely $\rist_G(v_1)$, $\rist_G(v_2)$ and $\rist_G(v_3)$. It then follows from \ref{item:FiniteIndex2TransitiveIsPrimitive} and \ref{item:PrimitiveNoCommutingSubgroups} that $G$ cannot act faithfully and $2$-transitively on a set and thus that its transitivity degree must be $1$.

To complete the proof, it remains only to show \ref{item:FiniteIndex2TransitiveIsPrimitive} and \ref{item:PrimitiveNoCommutingSubgroups}. Item \ref{item:FiniteIndex2TransitiveIsPrimitive} is Theorem 7.2D in \cite{DixonMortimer96}. For \ref{item:PrimitiveNoCommutingSubgroups}, let $G$ be a group acting faithfully and primitively on a set $X$, and let $N_1, N_2,N_3 \trianglelefteq G$ be three non-trivial commuting normal subgroup. For $i=1,2,3$, as $N_i$ is a normal subgroup, its orbits form a $G$-invariant partition of $X$, and thus from faithfulness and non-triviality, we conclude that $N_i$ acts transitively on $X$. Let us fix $x\in X$, and let $g_1\in N_1$ be any element. Then, by transitivity, there exists $g_2\in N_2$ such that $g_2^{-1}g_1x = x$. Using the fact that $N_3$ commutes with $N_1$ and $N_2$, we see that $g_3x = g_2^{-1}g_1g_3x$. As $N_3$ acts transitively on $X$, we see that $g_2^{-1}g_1y = y$ for all $y\in X$. By faithfulness, we have $g_2^{-1}g_1=1$, which implies that $g_1\in N_2$. As $g_1$ was arbitrary, we get $N_1\leq N_2$, and by symmetry, we have $N_1=N_2$, a contradiction to our assumption that $N_1\ne N_2$.
\end{proof}

\section{Quasi-prodense subgroups of branch groups}\label{sec:ProdenseSubgroups}

In this section, we define the notion of a quasi-prodense subgroup and we obtain a few useful results about quasi-prodense subgroups of branch groups.
These results are mostly generalisations of results on prodense subgroups of branch groups obtained by the author in \cite{Francoeur20a}. Unfortunately, although the main ideas are the same, the results in \cite{Francoeur20a} are not sufficiently strong for our purposes and cannot be used to deduce what we require. We therefore present here complete proofs of all results, following closely the outline of the proofs in \cite{Francoeur20a} but with a few key modifications necessary to deal with the added complications.

Let us begin with a definition.

\begin{defn}
A subgroup $H$ of a group $G$ is said to be \emph{quasi-prodense} if its \emph{pro-normal closure}
\[\overline{H}=\bigcap_{\substack{N\trianglelefteq G\\ N\ne 1}}HN\]
is of finite index in $G$.
\end{defn}

This notion is very similar to the notion of \emph{finite proindex} subgroup defined by de Cornulier in \cite{Cornulier06}. A subgroup $H$ of a group $G$ is said to be of finite proindex if its closure in the profinite topology is of finite index. Here, however, we consider a different kind of closure. In some groups, including branch and weakly branch groups, the pro-normal closure of a subgroup is indeed the closure of this subgroup in a topology, finer than the profinite topology, known as the \emph{pro-normal} topology (see for example \cite{GlasnerSoutoStorm10}). However, the pro-normal topology, unlike the profinite topology, cannot be defined for every group, although the definition of the pro-normal closure given above makes sense for all groups.

As the next lemma shows, in branch and weakly branch groups, sections of quasi-prodense subgroups are also quasi-prodense.

\begin{lemma}\label{lemma:ProjectionOfDenseIsDense}
Let $G$ be a weakly branch group acting on a rooted tree $T$. If $H\leq G$ is a quasi-prodense subgroup of $G$, then for every $v\in T$, the subgroup $H_v=\varphi_v(\St_H(v))$ is a quasi-prodense subgroup of $G_v=\varphi_v(\St_G(v))$.
\end{lemma}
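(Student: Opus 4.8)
The plan is to show that the pro-normal closure $\overline{H_v}$ contains the section of $\overline{H}$, which is already of finite index. Set $(\overline{H})_v = \varphi_v(\St_{\overline{H}}(v))$. Since $\overline{H}$ is of finite index in $G$, the subgroup $\St_{\overline{H}}(v) = \overline{H}\cap \St_G(v)$ is of finite index in $\St_G(v)$, and as $\varphi_v$ maps $\St_G(v)$ onto $G_v$, the image $(\overline{H})_v$ is of finite index in $G_v$. Note also that $G_v$ is itself weakly branch and acts spherically transitively on $T_{|v|}$ by Lemmas \ref{lemma:ProjectionOfRistIsInRist} and \ref{lemma:SectionsSphericallyTransitiveAreSphericallyTransitive}, so that its pro-normal closure is well defined. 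It therefore suffices to prove the inclusion $(\overline{H})_v \leq \overline{H_v}$, that is, that $(\overline{H})_v \leq H_v M$ for every non-trivial normal subgroup $M \trianglelefteq G_v$.

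The heart of the argument is to pull such an $M$ back to a non-trivial normal subgroup of $G$ lying inside $\St_G(v)$. Given $M \neq 1$ normal in $G_v$, it moves some vertex of $T_{|v|}$, so by Lemma \ref{lemma:SubnormalSubgroupsContainRist} there is a level $\ell$ with $\rist_{G_v}'(\ell) \leq M$. I then set $N = \rist_G'(|v|+\ell)$. This subgroup is normal in $G$, being characteristic in the normal subgroup $\rist_G(|v|+\ell)$; it is non-trivial by Lemma \ref{lemma:RistDerivedNonTrivial}; and it is contained in $\St_G(|v|+\ell) \leq \St_G(v)$. Moreover, since $\varphi_v$ is a homomorphism on $\St_G(v)$ and $\rist_G(|v|+\ell) \leq \St_G(v)$, Lemma \ref{lemma:ProjectionOfRistIsInRist} gives $\varphi_v(N) = [\varphi_v(\rist_G(|v|+\ell)), \varphi_v(\rist_G(|v|+\ell))] \leq \rist_{G_v}'(\ell) \leq M$.

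With $N$ in hand, the inclusion follows by a direct computation using again that $\varphi_v$ is a homomorphism on the stabiliser. Take $g \in \St_{\overline{H}}(v)$. Since $N \trianglelefteq G$ is non-trivial and $g \in \overline{H} \subseteq HN$, we may write $g = hn$ with $h \in H$ and $n \in N \leq \St_G(v)$. Then $h = gn^{-1} \in \St_G(v)$, so $h \in \St_H(v)$ and $\varphi_v(h) \in H_v$, while $\varphi_v(n) \in \varphi_v(N) \leq M$. Applying $\varphi_v$ to $g = hn$ yields $\varphi_v(g) = \varphi_v(h)\varphi_v(n) \in H_v M$. As $g$ was arbitrary, $(\overline{H})_v \leq H_v M$, and since $M$ was an arbitrary non-trivial normal subgroup of $G_v$, we conclude $(\overline{H})_v \leq \overline{H_v}$, which finishes the proof.

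I expect the main obstacle to be the middle step: producing, from an abstract non-trivial normal subgroup $M$ of the section group $G_v$, a non-trivial normal subgroup $N$ of $G$ that both sits inside $\St_G(v)$ and projects into $M$. This is precisely where the branch structure is essential — the rigid level stabilisers furnish normal subgroups of $G$ whose behaviour under the section maps is controlled by Lemma \ref{lemma:ProjectionOfRistIsInRist}, their iterated derived subgroups remain non-trivial by Lemma \ref{lemma:RistDerivedNonTrivial}, and the containment $\varphi_v(N) \leq M$ is forced through Lemma \ref{lemma:SubnormalSubgroupsContainRist}. The remaining finite-index bookkeeping and the final coset computation are routine.
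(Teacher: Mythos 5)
Your proof is correct and follows essentially the same route as the paper's: both reduce an arbitrary non-trivial normal subgroup of $G_v$ to some $\rist'_{G_v}(\ell)$ via Lemma \ref{lemma:SubnormalSubgroupsContainRist}, pull back to $\rist'_G(|v|+\ell)$ using Lemma \ref{lemma:ProjectionOfRistIsInRist}, and push the decomposition $g=hn$ through $\varphi_v$ to get $(\overline{H})_v\leq \overline{H_v}$. The only cosmetic difference is that the paper first rewrites $\overline{H_v}$ as $\bigcap_n H_v\rist'_{G_v}(n)$ and then checks containment level by level, whereas you handle each normal subgroup $M$ directly; the substance is identical.
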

\begin{proof}
Let us fix $v\in T$. We have
\[\overline{H_v} = \bigcap_{\substack{N\trianglelefteq G_v\\ N\ne 1}}H_vN \leq \bigcap_{n\in \N}H_v\rist'_{G_v}(n).\]
On the other hand, since $G_v$ is a weakly branch group by Lemma \ref{lemma:ProjectionOfRistIsInRist}, for every non-trivial normal subgroup $N\trianglelefteq G$, there exists some $n\in \N$ such that $\rist'_{G_v}(n)\leq N$ by Lemma \ref{lemma:SubnormalSubgroupsContainRist}. Therefore, we have
\[\overline{H_v} = \bigcap_{n\in \N}H_v\rist'_{G_v}(n).\]

Since $H$ is quasi-prodense by assumption, we have by definition that $\overline{H}$ is of finite index in $G$, which implies that $(\overline{H})_v=\varphi_v(\St_{\overline{H}}(v))$ is of finite index in $G_v$.
Let $h\in \St_{\overline{H}}(v)$ be an arbitrary element. Then, for any $n\in \N$, there exist $h_n\in H$ and $r_n\in \rist'_G(n)$ such that $h=h_nr_n$. For every $n\in \N$ greater that $|v|$, we have $r_n\in \rist'_G(n) \leq \St_G(v)$ and thus $h_n\in \St_H(v)$. Therefore, we have 
\[\varphi_v(h)=\varphi_v(h_n)\varphi_v(r_n)\in H_v\rist'_{G_v}(n-|v|)\]
by Lemma \ref{lemma:ProjectionOfRistIsInRist}.
This implies that $\varphi_v(h)\in \bigcap_{n\in \N}H_v\rist'_{G_v}(n)$ and thus that $(\overline{H})_v\leq \overline{H_v}$. As $(\overline{H})_v$ is of finite index in $G_v$, we conclude that $H_v$ is quasi-prodense in $G_v$.
\end{proof}

\begin{lemma}\label{lemma:RistNonTrivial}
Let $G$ be a weakly branch group acting on a rooted tree $T$ and let $H$ be a quasi-prodense subgroup of $G$. Suppose furthermore that for every vertex $v\in T$ different from the root, the subgroup $H_v=\varphi_v(\St_H(v))$ is of finite index in $G_v=\varphi_v(\St_G(v))$. Then, we must have $\rist_H(v)\ne 1$ for all $v\in T$.
\end{lemma}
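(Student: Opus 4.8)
The plan is to argue by contradiction, after two easy reductions. First, the root case is trivial: since $\rist_G(\epsilon)=G$ we have $\rist_H(\epsilon)=H$, and $H\ne 1$ because $\overline{H}$ is of finite index in the infinite group $G$. Second, since $T_w\subseteq T_v$ whenever $w\ge v$, the rigid stabilisers are monotone, $\rist_H(w)\le\rist_H(v)$ for $w\ge v$; hence if the statement failed at some $v\ne\epsilon$, i.e.\ $\rist_H(v)=1$, then $\rist_H(w)=1$ for every descendant $w\ge v$. I would fix such a $v$ and work towards a contradiction, the goal being to exhibit a non-trivial element of $H$ supported in $T_v$.

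Next I would extract rigid data from each hypothesis separately. From quasi-prodensity: $\overline{H}$ is of finite index, so it contains a non-trivial normal subgroup of $G$, and by Lemma \ref{lemma:SubnormalSubgroupsContainRist} there is $n$ with $\rist_G'(n)\le\overline{H}$. Choosing $n>|v|$ and a descendant $y\ge v$ with $|y|=n$ yields a non-trivial subgroup $\rist_G'(y)\le\overline{H}$ supported inside $T_v$ (so with non-trivial ``inside'' action and trivial ``outside'' action), witnessing $\rist_{\overline H}(v)\ne 1$. From the finite-index hypothesis: $G_v$ is weakly branch by Lemma \ref{lemma:ProjectionOfRistIsInRist}, and $H_v$ is of finite index in it, so, again via Lemma \ref{lemma:SubnormalSubgroupsContainRist} applied inside $G_v$, the group $H_v$ contains some $\rist_{G_v}'(p)$ and is in particular infinite.

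The heart of the argument is to turn this $\overline{H}$-information into $H$-information. I would introduce the outside-action homomorphism $\rho\colon\St_H(v)\to\Aut(T\setminus T_v)$ recording how an element of $\St_H(v)$ moves the vertices off $T_v$; its kernel is exactly $\rist_H(v)$, and since an element of $\St_H(v)$ is determined by its restrictions to $T_v$ and to $T\setminus T_v$, the map $h\mapsto(\varphi_v(h),\rho(h))$ embeds $\St_H(v)$ into $G_v\times\rho(\St_H(v))$. Under the standing assumption $\rist_H(v)=1$ the map $\rho$ is injective, so the inside action $\varphi_v$ factors through the outside action $\rho$; equivalently, the finite-index (hence infinite) inside action $H_v$ is an abstract quotient of the outside action. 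I would try to contradict this using the elements of $\rist_G'(y)\le\overline H$: as $HN=\overline H N$ for every non-trivial normal $N$ (a direct consequence of $\overline H=\bigcap_{N\ne 1}HN$), these elements are congruent modulo $\rist_G'(m)$ to genuine elements $h\in H$. Such an $h$ agrees with its $\overline H$-model on all of $T$ up to level $m$, so it has non-trivial inside action while its outside action is trivial up to level $m$.

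The main obstacle is precisely this last transfer: the congruence argument only produces elements of $H$ whose outside action vanishes up to a finite level, leaving a deep ``tail'', so by itself it controls the profinite (or pro-normal) closure of $H$ rather than $H$ itself --- which is why $\rist_{\overline H}(v)\ne 1$ is easy but $\rist_H(v)\ne 1$ is not. Closing this gap is where the full strength of the hypothesis (finite-index sections at \emph{every} vertex $w\ne\epsilon$, not only at $v$) must be used. Concretely, I would attempt to cancel the deep outside tail of $h$ by an element of $H$ supported off $T_v$ with the same outside action --- its existence being governed by the finite-index sections $H_c$ at the off-path subtrees $T_c$, which are themselves quasi-prodense by Lemma \ref{lemma:ProjectionOfDenseIsDense} --- so that the product lands in $\rist_H(v)$ with non-trivial inside action; alternatively, push the Goursat-type coupling to show that the finite-index inside action $H_v$ cannot embed as a quotient of the outside action. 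I expect this cancellation, rather than the production of rigid elements inside $\overline H$, to be the crux of the proof.
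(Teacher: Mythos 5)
Your setup is sound and you have correctly located the difficulty, but the proposal stops exactly where the proof has to begin: you write that the cancellation of the deep outside tail ``is the crux of the proof'' without supplying it, and the cancellation you sketch is essentially circular. To kill the tail of $h$ you would need an element of $H$ acting trivially on $T_v$ whose outside action matches that of $h$; producing such elements is again a statement of the form ``$\rist_H(\cdot)$ of a complementary region is large,'' i.e.\ the very kind of non-triviality you are trying to prove. Likewise, your global Goursat observation --- that under $\rist_H(v)=1$ the inside action $\varphi_v$ factors through the outside action $\rho$ --- is too coarse to contradict anything on its own, since $\rho(\St_H(v))$ is an uncontrolled group and there is no obstruction to an infinite finite-index section being a quotient of it.

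The missing idea in the paper's argument is a \emph{localisation} of your coupling to a single pair of vertices, obtained by a minimality argument. One takes $U\subseteq\level{n}$ of minimal cardinality with $\varphi_v(\rist_H(U))\ne 1$ (where $\rist_H(U)$ consists of elements of $\St_H(n)$ supported in $\bigcup_{u\in U}T_u$); since $\rist_H(v)=1$ forces $|U|\ge 2$, one picks $u\in U\setminus\{v\}$ and shows, using minimality, that $\varphi_v(\rist_H(U))$ and $\varphi_v(\rist_H(\level{n}\setminus\{u\}))$ are commuting subnormal subgroups of the finite-index subgroup $H_v\le G_v$. Lemma \ref{lemma:CommutingSubgroupsDisjointSupport} then yields a vertex $vw$ on which $\rist_H(\level{n}\setminus\{u\})$ acts trivially, and this is what makes the assignment $\varphi_u(g)\mapsto\varphi_{vw}(g)$ a well-defined homomorphism $\alpha$ between actual sections. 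With $\alpha$ in hand the contradiction is not a cancellation but a propagation of depth: a non-trivial $r\in\rist_{\overline H}(vw)$ has $\varphi_{vw}(r)\notin\St_{G_{vw}}(m_1)$ for some fixed $m_1$, yet writing $r=hg$ with $h\in H$ and $g\in\rist_G'(N)$ for $N$ large forces $\varphi_u(h)=\varphi_u(g^{-1})$ into a deep rigid level stabiliser of $G_u$, hence (via $\alpha$ and Lemma \ref{lemma:SubnormalSubgroupsContainRist}) $\varphi_{vw}(h)\in\St_{G_{vw}}(m_1)$, hence $\varphi_{vw}(r)\in\St_{G_{vw}}(m_1)$ --- a contradiction. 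Without the minimal-support step and the disjoint-support lemma, the approximation $r=hg$ gives you nothing at the vertex $vw$, which is precisely the gap you flagged but did not close.
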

\begin{proof}
We begin by fixing some notation that we will use throughout the proof. If $U\subset T$ is a set of vertices of $T$, we define the subgroup
\[\rist_H(U) = \left\{h\in H \hspace{0.3em}\Big\vert\hspace{0.3em} hw=w \quad \forall w\notin \bigcup_{u\in U} T_u\right\}\cap \bigcap_{u\in U}\St_H(u).\]
In particular, we have $\rist_H(\{v\}) = \rist_H(v)$ for all $v\in T$, and $\rist_H(\level{n}) = \St_H(n)$ for all $n\in \N$. Take notice that for convenience, we have given here a slightly different definition of $\rist_H(U)$ than the one given in \cite{Francoeur20a}.

Now, let us suppose for a contradiction that there exists a vertex $v\in T$ different from the root such that $\rist_H(v)=1$, and let us set $n=|v|$. Let $U\subseteq \level{n}$ be a subset of minimal cardinality such that $\varphi_v(\rist_H(U)) \ne 1$. Such as subset exists, since $\varphi_v(\rist_H(\level{n}))=\varphi_v(\St_H(n))$, and as $\varphi_v(\St_H(n))$ is of finite index in $\varphi_v(\St_H(v))=H_v$, which is itself of finite index in the weakly branch group $G_v$ by assumption, we know that $\varphi_v(\St_H(n))\ne 1$. Notice that we must have $v\in U$, otherwise any element of $\rist_H(U)$ would act trivially on $T_v$, which would imply that $\varphi_v(\rist_H(U))=1$. Since $\rist_H(v)=1$, it follows that $|U|\geq 2$. Let us pick some $u\in U\setminus \{v\}$.

Writing $K_1=\rist_H(U)$ and $K_2=\rist_H(\level{n}\setminus \{u\})$, we have
\[[K_1,K_2] \leq \rist_H(U\setminus\{u\}).\]
Indeed, if $w\in \level{n}$ is a vertex not belonging to $U\setminus \{u\} = U\cap (\level{n}\setminus \{u\})$, then $K_1T_w = K_2T_w = T_w$, since both $K_1$ and $K_2$ are in $\St_H(n)$, and one of $K_1$ or $K_2$ must act trivially on $T_w$, since $w$ is either not in $U$ or not in $\level{n}\setminus \{u\}$, from which it follows that $[K_1,K_2]$ must also act trivially on $T_w$.
By the minimality of the cardinality of $U$, we must have $\varphi_v(\rist_H(U\setminus\{u\}))=1$. It follows that 
\[[\varphi_v(K_1), \varphi_v(K_2)] = \varphi_v([K_1,K_2])\leq \varphi_v(\rist_H(U\setminus\{u\})) = 1,\]
and thus that $\varphi_v(K_1)$ and $\varphi_v(K_2)$ commute.

Both $K_1$ and $K_2$ are normal subgroups of $\St_H(n)$, which is itself a normal subgroup of finite index of $\St_H(v)$. Thus, $\varphi_v(K_1)$ and $\varphi_v(K_2)$ are two commuting subnormal subgroups of $H_v$, which is by assumption a finite index subgroup of the weakly branch group $G_v$. It follows from Lemma \ref{lemma:CommutingSubgroupsDisjointSupport} that $\varphi_v(K_1)$ and $\varphi_v(K_2)$ must have disjoint support. Since $\varphi_v(K_1)\ne 1$ by the definition of $K_1$, there must exist some $w\in T_{|v|}$ such that $\varphi_v(K_1)w \ne \{w\}$, and it follows that we must have $\varphi_v(K_2)w = \{w\}$. Furthermore, since every descendant of $w$ will also be moved by $\varphi_v(K_1)$, they must all be fixed by $\varphi_v(K_2)$. In other words, we have $\varphi_v(K_2)\leq \St_{H_v}(w)$ and $\varphi_w(\varphi_v(K_2))=1$. It follows that $K_2\leq \St_H(vw)$, with $\varphi_{vw}(K_2)=1$. 

Let us write $m=|w|$. The map
\begin{align*}
\alpha\colon \varphi_u(\St_H(n+m)) & \rightarrow \varphi_{vw}(\St_H(n+m)) \\
\varphi_u(g) & \mapsto \varphi_{vw}(g)
\end{align*}
is a well-defined homomorphism. Indeed, using the fact that both $\varphi_u$ and $\varphi_{vw}$ are homomorphisms when restricted $\St_H(n+m)$, if $g_1,g_2\in \St_H(n+m)$ are such that $\varphi_u(g_1)=\varphi_u(g_2)$, then $g_1g_2^{-1}\in K_2$, which implies that $\varphi_{vw}(g_1g_2^{-1})=1$ and thus $\varphi_{vw}(g_1)=\varphi_{vw}(g_2)$. We will show that the existence of such a homomorphism is in contradiction with the fact that $H$ is quasi-prodense.

Let $r\in \rist_{\overline{H}}(vw)$ be a non-trivial element. Such an element must exist, since $\overline{H}$ is a subgroup of finite index of the weakly branch group $G$. Notice however that $r\notin H$, since $\rist_H(vw) \leq \rist_H(v)=1$. As $r$ belongs to the rigid stabiliser of $vw$ and is non-trivial, we must have $\varphi_{vw}(r)\ne 1$, and therefore there must exist $m_1\in \N$ such that $\varphi_{vw}(r) \notin \St_{G_{vw}}(m_1)$.

Let us consider the subgroup $L=\alpha^{-1}(\varphi_{vw}(\St_{H}(n+m+m_1))) \geq \varphi_u(\St_{H}(n+m+m_1))$. As $\St_H(n+m+m_1)$ is of finite index in $\St_H(u)$, the subgroup $L$ is of finite index in $H_u$, which is itself of finite index $G_u$. Thus, by Lemma \ref{lemma:SubnormalSubgroupsContainRist},there exists some $m_2\in \N$ such that $\rist'_{G_u}(m_2)\leq L$.

Now, let $m_3=\max\{m_1,m_2-m\}$. By definition, we have $\overline{H}\leq H \rist'_G(n+m+m_3)$. This implies that $r=hg$ for some $h\in H$ and some $g\in \rist'_G(n+m+m_3)$. Since we have $r,g\in \St_G(n+m)$, we must have $h\in \St_H(n+m)$. Therefore, applying $\varphi_u$ to both side of this equality, we find $\varphi_u(r)=\varphi_u(h)\varphi_u(g)$. As $r\in \rist_H(vw)$, we must have $\varphi_u(r)=1$. Therefore, $\varphi_u(h) = \varphi_u(g^{-1})$. Since $g\in \rist'_G(n+m+m_3)$, we conclude, using Lemma \ref{lemma:ProjectionOfRistIsInRist}, that
\[\varphi_u(h)\in \varphi_u(\rist'_G(n+m+m_3))\leq \rist'_{G_u}(m+m_3)\leq \rist'_{G_u}(m_2) \leq L.\]

%On the other hand, by applying $\varphi_{vw}$ to both sides of the equality $r=hg$, we obtain $\varphi_{vw}(r)=\varphi_{vw}(h)\varphi_{vw}(g)$.
Using the fact that $\varphi_{vw}(h)=\alpha(\varphi_u(h))$, we see that 
\[\varphi_{vw}(h)\in \alpha(L) = \varphi_{vw}(\St_H(n+m+m_1)) \leq \St_{H_{vw}}(m_1).\]
Since we also have $g\in \rist'_G(n+m+m_3)\leq  \St_G(n+m+m_1)$, we also get $\varphi_{vw}(g)\in \St_{G_{vw}}(m_1)$.

Applying $\varphi_{vw}$ to both sides of the equality $r=hg$, we obtain $\varphi_{vw}(r)=\varphi_{vw}(h)\varphi_{vw}(g)$, and since $\varphi_{vw}(h),\varphi_{vw}(g)\in \St_{G_{vw}}(m_1)$, we conclude that $\varphi_{vw}(r)\in \St_{G_{vw}}(m_1)$. However, this is a contradiction, since we established above that $\varphi_{vw}(r) \notin \St_{G_{vw}}(m_1)$. We conclude that our initial assumption that $\rist_H(v)=1$ was false.
\end{proof}

\begin{thm}\label{thm:ProjectionsAreProper}
Let $G$ be a weakly branch group acting on a rooted tree $T$ and let $H<G$ be a quasi-prodense subgroup of infinite index. Then, for every $n\in \N$, there exists a vertex $v\in \level{n}$ such that $H_v=\varphi_v(\St_H(v))$ is a quasi-prodense subgroup of infinite index of $G_v=\varphi_v(\St_G(v))$.
\end{thm}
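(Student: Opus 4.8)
The plan is to separate the two conclusions. By Lemma~\ref{lemma:ProjectionOfDenseIsDense} the section $H_v$ is automatically quasi-prodense in $G_v$ for \emph{every} vertex $v$, so the entire content is to produce, on each level, one vertex whose section has infinite index. Since sections compose, that is $(H_v)_w=H_{vw}$ and $(G_v)_w=G_{vw}$, since $G_v$ is again weakly branch by Lemma~\ref{lemma:ProjectionOfRistIsInRist}, and since $H_v$ is again quasi-prodense by Lemma~\ref{lemma:ProjectionOfDenseIsDense}, it suffices to prove the single-step statement: if $H<G$ is quasi-prodense of infinite index, then some $v\in\level{1}$ has $H_v$ of infinite index in $G_v$. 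One then descends along a ray, applying this statement inside the weakly branch groups $G_{v_1},G_{v_1v_2},\dots$ (whose sections remain quasi-prodense by Lemma~\ref{lemma:ProjectionOfDenseIsDense}), to reach an infinite-index section at any prescribed level $n$. I would prove the single-step statement in its contrapositive form: if $H$ is quasi-prodense and every $H_v$ with $v\in\level{1}$ has finite index in $G_v$, then $H$ has finite index in $G$.

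Assume then that all level-$1$ sections have finite index. First I would note that finite index propagates downward: if $H_v$ has finite index in $G_v$ and $u$ is a child of $v$, then $\St_{H_v}(u)$ has finite index in $\St_{G_v}(u)$, so applying $\varphi_u$ shows $H_{vu}=(H_v)_u$ has finite index in $G_{vu}=(G_v)_u$. By induction $H_v$ has finite index in $G_v$ for every vertex $v$ other than the root, and Lemma~\ref{lemma:RistNonTrivial} then gives $\rist_H(v)\ne 1$ for all $v\in T$.

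The crux is to convert this rigid-stabiliser information into genuine membership in $H$. For a fixed $v$ set $\nu_v=\varphi_v(\rist_H(v))$. Since $\rist_G(v)\trianglelefteq\St_G(v)$, the subgroup $\nu_v$ is normal in $H_v$; and since $\varphi_v(\rist_H(vu))$ is a nontrivial subgroup of $\rist_{G_v}(u)$ for every descendant $vu$ (using Lemma~\ref{lemma:ProjectionOfRistIsInRist} together with the injectivity of $\varphi_v$ on $\rist_G(v)$), we get $\rist_{\nu_v}(u)\ne 1$ for all $u$. As $H_v$ has finite index in the weakly branch group $G_v$, Lemma~\ref{lemma:CanFindRistInMicroSuppSubgroup} yields some $n_v$ with $\rist_{G_v}'(n_v)\le\nu_v=\varphi_v(\rist_H(v))$. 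The decisive point—and the step I expect to be the main obstacle, since the analogous containment with $H_v$ in place of $\nu_v$ would be useless—is that this inclusion lives entirely inside $\rist_G(v)$, where $\varphi_v$ is injective: for $w=vu$ deep enough one has $\varphi_v(\rist_G'(w))\le\rist_{G_v}'(u)\le\rist_{G_v}'(n_v)\le\nu_v$, and applying $\varphi_v^{-1}$ gives $\rist_G'(w)\le\rist_H(v)\le H$. The whole difficulty is concentrated in recognising that Lemma~\ref{lemma:CanFindRistInMicroSuppSubgroup} must be applied to the \emph{rigid image} $\nu_v$, precisely so that the pull-back through the injective section map is available.

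To finish, fix any $n$ and choose $N$ larger than $n+\max_{v\in\level{n}}n_v$; the previous paragraph then gives $\rist_G'(w)\le H$ for all $w\in\level{N}$, hence $\rist_G'(N)\le H$. I would then invoke quasi-prodensity: $\rist_G'(N)$ is a nontrivial normal subgroup of $G$ (nontrivial by Lemma~\ref{lemma:RistDerivedNonTrivial}), so $\overline{H}\subseteq H\,\rist_G'(N)=H$, forcing $H=\overline{H}$, which has finite index by hypothesis. This contradicts the assumption that $H$ has infinite index, establishing the single-step statement and therefore the theorem.
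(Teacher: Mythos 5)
Your proposal is correct and follows essentially the same route as the paper: reduce to the single-step statement at level $1$ via Lemma \ref{lemma:ProjectionOfDenseIsDense} and induction, argue by contradiction using Lemma \ref{lemma:RistNonTrivial}, apply Lemma \ref{lemma:CanFindRistInMicroSuppSubgroup} to the rigid image $\varphi_v(\rist_H(v))$ (exactly the point you flag as decisive, and exactly what the paper does), pull back through the injectivity of $\varphi_v$ on $\rist_G(v)$ to get $\rist_G'(N)\leq H$, and contradict quasi-prodensity. The only cosmetic differences are that you verify the hypothesis $\rist_{\nu_v}(u)\ne 1$ of Lemma \ref{lemma:CanFindRistInMicroSuppSubgroup} more explicitly than the paper, and you derive the downward propagation of finite index rather than assuming it for all non-root vertices at once.
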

\begin{proof}
To prove the result, it suffices to show that there exists a vertex $v\in T$ such that $H_v$ is of infinite index in $G_v$. Indeed, if that is the case, then we may assume without loss of generality that $v\in \level{1}$, since $H_u$ being of infinite index in $G_u$ for some $u\in T$ implies that $H_w$ is of infinite index in $G_w$ for all $w\leq u$. It then follows from Lemma \ref{lemma:ProjectionOfDenseIsDense} that the result is true for $n=1$. By induction, we then conclude that the result is true for all $n\in \N$.

For the sake of contradiction, let us suppose that $H_v$ is of finite index in $G_v$ for all $v\in T$ except the root. Then, by Lemma \ref{lemma:RistNonTrivial}, we have $\rist_H(v)\ne 1$ for all $v\in T$. Thus, as $\rist_H(v)$ is a normal subgroup of $\St_H(v)$, we see that $\varphi_v(\rist_H(v))$ is a non-trivial normal subgroup of $H_v$, which is itself a subgroup of finite index of the weakly branch group $G_v$. Therefore, it follows from Lemma \ref{lemma:CanFindRistInMicroSuppSubgroup} that there exists some $m_v\in \N$ such that $\rist'_{G_v}(m_v)\leq \varphi_v(\rist_{H}(v))$, which implies, using Lemma \ref{lemma:ProjectionOfRistIsInRist}, that $\varphi_v(\rist'_G(m_v+|v|))\leq \varphi_v(\rist_H(v))$.

Let us set $m=\max\{m_v \mid v\in \level{1}\}$. Then, by the above argument, we have $\varphi_v(\rist'_G(m+1))\leq \varphi_v(\rist_H(v))$ for all $v\in \level{1}$. Since $\varphi_v(\rist'_G(m+1)) = \varphi_v(\rist'_G(m+1)\cap \rist_G(v))$, using the fact that $\varphi_v$ is injective on $\rist_G(v)$, we find that $\rist'_G(m+1)\cap \rist_G(v)\leq H$. As this is true for every $v\in \level{1}$, we conclude that $\rist'_G(m+1)\leq H$.

However, this contradicts the fact that $H$ is a quasi-prodense subgroup of infinite index. Indeed, we have $\overline{H}\leq H\rist'_G(m+1) =H$, with $\overline{H}$ of finite index and $H$ of infinite index, which is absurd. Thus, we conclude that there must exist some vertex $v\in T$ different from the root such that $H_v$ is of infinite index. This concludes the proof.
\end{proof}

Using this result, one can show the following proposition, which will be crucial in the next section.
\begin{prop}\label{prop:NbProjectionsInfiniteIndexUnbounded}
Let $G$ be a weakly branch group acting on a rooted tree $T$, and let $H<G$ be a quasi-prodense subgroup of infinite index. Then, for every $M\in \N$, there exists $n\in \N$ such that
\[|\{v\in\level{n} \mid [G_v:H_v]=\infty\}| > M,\]
where $H_v=\varphi_v(\St_H(v))$ and $G_v=\varphi_v(\St_G(v))$.
\end{prop}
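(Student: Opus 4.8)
The plan is to reduce the statement to a counting argument: I will show that whenever $S_n := \{v\in\level n : [G_v:H_v]=\infty\}$ is non-empty, its cardinality is bounded below by $|\level n|/[G:\overline H]$, where $\overline H$ is the pro-normal closure of $H$. Since $G$ is weakly branch it acts spherically transitively, so $|\level n|\to\infty$, while $[G:\overline H]$ is a fixed finite number because $H$ is quasi-prodense; and Theorem \ref{thm:ProjectionsAreProper} guarantees $S_n\neq\emptyset$ for every $n$. Taking $n$ with $|\level n| > M\,[G:\overline H]$ then yields $|S_n| > M$, as desired. Two ingredients drive this bound: first, $H$ and $\overline H$ have the same orbits on each level; second, each $S_n$ is invariant under $H$.

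For the first ingredient, fix a vertex $v\in\level n$. By Lemma \ref{lemma:RistDerivedNonTrivial} the subgroup $\rist'_G(n)$ is non-trivial, and it is normal in $G$ (being characteristic in the normal subgroup $\rist_G(n)$), so it occurs in the intersection defining $\overline H$; hence $\overline H\subseteq H\rist'_G(n)$. As $\rist'_G(n)\leq\St_G(n)$ fixes every vertex of level $n$, we get $H\rist'_G(n)\cdot v = H\cdot v$, and therefore $\overline H\cdot v = H\cdot v$. Combining $\St_{\overline H}(v)\leq\St_G(v)$ with spherical transitivity ($|G\cdot v|=|\level n|$) then gives
\[
 |\level n| = [G:\St_G(v)] \leq [G:\St_{\overline H}(v)] = [G:\overline H]\cdot|\overline H\cdot v| = [G:\overline H]\cdot|H\cdot v|,
\]
so that $|H\cdot v|\geq |\level n|/[G:\overline H]$ for every $v\in\level n$.

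The second ingredient, which I expect to be the main point of the argument, is that the function $v\mapsto[G_v:H_v]$ is constant on $H$-orbits, so that $S_n$ is a union of $H$-orbits. To see this I would fix $h\in H$ and a vertex $v$, and use the cocycle identity $\varphi_v(g_1g_2)=\varphi_{g_2v}(g_1)\varphi_v(g_2)$ to compute, for $s\in\St_G(v)$, that $\varphi_{hv}(hsh^{-1})=\varphi_v(h)\,\varphi_v(s)\,\varphi_v(h)^{-1}$ (using $sv=v$ together with $\varphi_{hv}(h^{-1})=\varphi_v(h)^{-1}$). Since $\St_G(hv)=h\,\St_G(v)\,h^{-1}$ and, because $h\in H$, also $\St_H(hv)=h\,\St_H(v)\,h^{-1}$, this shows that conjugation by $\varphi_v(h)$ carries the pair $(G_v,H_v)$ isomorphically onto $(G_{hv},H_{hv})$; in particular $[G_{hv}:H_{hv}]=[G_v:H_v]$. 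The delicate part here is purely the bookkeeping with the section maps $\varphi_v$, which are not homomorphisms on all of $G$; one must keep careful track of the basepoints in the cocycle.

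Finally I would assemble the pieces. Given $M\in\N$, choose $n$ with $|\level n|>M\,[G:\overline H]$; by Theorem \ref{thm:ProjectionsAreProper} there is a vertex $u\in S_n$. Since $S_n$ is $H$-invariant, $H\cdot u\subseteq S_n$, and by the first ingredient $|H\cdot u|\geq|\level n|/[G:\overline H]>M$. Hence $|S_n|\geq|H\cdot u|>M$, completing the proof.
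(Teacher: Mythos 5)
Your proof is correct and follows essentially the same route as the paper: locate one vertex with infinite-index section via Theorem \ref{thm:ProjectionsAreProper}, observe that the property propagates along its $H$-orbit by conjugacy of the sections, and note that $H$-orbits agree with $\overline{H}$-orbits on each level and hence grow without bound. The only (harmless) difference is that you bound the orbit size directly by orbit--stabiliser counting, $|\overline{H}\cdot v|\geq |\level{n}|/[G:\overline{H}]$, where the paper instead invokes Lemma \ref{lemma:TransitiveActionFiniteIndexBranch}.
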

\begin{proof}
By the definition of a quasi-prodense subgroup, the subgroup
\[\overline{H}=\bigcap_{\substack{N\trianglelefteq G\\ N\ne 1}}HN\]
must be of finite index in $G$. Therefore, by Lemma \ref{lemma:TransitiveActionFiniteIndexBranch}, there exists $m\in \N$ such that $\St_{\overline{H}}(v)$ acts spherically transitively on $T_v$ for every $v\in \level{m}$. It follows that for every $M\in \N$, one can find $n\in \N$ such that
\[|\overline{H}w|> M\]
for every $w\in \level{n}$.

Notice that for every $u\in T$, we must have $Hu=\overline{H}u$. Indeed, it is clear that $Hu\subseteq \overline{H}u$, and to show the reverse inclusion, one simply has to observe that $\overline{H}\leq H\St_G(|u|)$, so $\overline{H}u \subseteq H\St_G(|u|)u = Hu$.

Therefore, for every $M\in \N$, there exists $n\in \N$ such that $|Hw|> M$ for every $w\in \level{n}$. By Theorem \ref{thm:ProjectionsAreProper}, there exists $v\in \level{n}$ such that $H_v$ is a quasi-prodense subgroup of infinite index of $G_v$. This implies that $[G_u:H_u]=\infty$ for every $u\in Hv$, since in this case, $H_u$ and $G_u$ are conjugates of $H_v$ and $G_v$. As $|Hv|> M$, the result follows.
\end{proof}

\section{Quasi-2-transitive actions of branch groups}\label{sec:Proof}

In this section, we finally prove our main result, namely that a finitely generated branch group cannot have a quasi-2-transitive action on an infinite set. In order to do this, we will show that branch groups possess property $BF$, as defined in Definition \ref{defn:PropertyBF}. In other words, we will show that every subgroup of infinite index of a finitely generated branch group must have infinite bi-index. We will then be able to conclude using Proposition \ref{prop:Infinite2TransitiveFiniteBiIndex}.

Before we begin, we first show that it suffices to restrict our attention to quasi-prodense subgroups.

\begin{lemma}\label{lemma:LookOnlyAlmostProDense}
Let $G$ be a group whose proper quotients possess property $BF$. If $H<G$ is a subgroup of $G$ of finite bi-index, then $H$ is quasi-prodense in $G$.
\end{lemma}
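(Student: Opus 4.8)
The plan is to show directly that the pro-normal closure
\[\overline{H}=\bigcap_{\substack{N\trianglelefteq G\\ N\ne 1}}HN\]
is of finite index in $G$. The definition presents $\overline{H}$ as an intersection over \emph{all} non-trivial normal subgroups, which is typically infinite, so the first task is to reduce it to a finite intersection. The key observation is that each $HN$ is a subgroup of $G$ containing $H$ (since $N$ is normal), and that by Proposition \ref{prop:FiniteBiIndexAlmostMaximal}, since $H$ has finite bi-index, $H$ is almost maximal: there are only finitely many subgroups of $G$ containing $H$. Hence the family $\{HN \mid N\trianglelefteq G,\ N\ne 1\}$ takes only finitely many distinct values, and $\overline{H}$ is in fact a finite intersection of subgroups of the form $HN$.

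Second, I would argue that each such $HN$ has finite index in $G$. Fix a non-trivial $N\trianglelefteq G$ and let $\pi\colon G\to G/N$ be the quotient map. Since $N\ne 1$, the quotient $G/N$ is a \emph{proper} quotient and thus possesses property $BF$ by hypothesis. I claim that $\pi(H)=HN/N$ has finite bi-index in $G/N$: writing $G$ as a finite union of double cosets $G=\bigcup_{i}Hg_iH$ and applying $\pi$, one obtains $G/N=\bigcup_{i}\pi(H)\pi(g_i)\pi(H)$, a finite union of double cosets of $\pi(H)$. Property $BF$ then forces $HN/N=\pi(H)$ to have finite index in $G/N$, which is exactly the statement that $HN$ has finite index in $G$.

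Finally, combining the two steps, $\overline{H}$ is a finite intersection of subgroups $HN$, each of which has finite index in $G$. A finite intersection of finite-index subgroups has finite index, so $\overline{H}$ has finite index in $G$, i.e.\ $H$ is quasi-prodense, as desired.

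I expect the main (though still mild) obstacle to be precisely the reduction to a finite intersection: without the almost-maximality input from Proposition \ref{prop:FiniteBiIndexAlmostMaximal}, the intersection defining $\overline{H}$ ranges over infinitely many normal subgroups, and an infinite intersection of finite-index subgroups need not have finite index, so one cannot conclude directly. The finite-bi-index hypothesis is thus used twice and in two different guises—once through almost maximality to bound the number of distinct subgroups $HN$, and once through its persistence under quotients (together with property $BF$ of proper quotients) to guarantee finite index of each $HN$—and keeping these two roles separate is the crux of the argument.
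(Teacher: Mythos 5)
Your proof is correct and follows essentially the same route as the paper's: almost maximality (Proposition \ref{prop:FiniteBiIndexAlmostMaximal}) reduces $\overline{H}$ to a finite intersection, and property $BF$ of the proper quotients $G/N$ shows each $HN$ has finite index. Your explicit verification that $HN/N$ has finite bi-index by pushing the double-coset decomposition through the quotient map is a detail the paper leaves implicit, but the argument is the same.
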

\begin{proof}
Let $H<G$ be a subgroup of finite bi-index and let us consider its pro-normal closure
\[\overline{H}=\bigcap_{\substack{N\trianglelefteq G\\ N\ne 1}}HN.\]
We want to show that $\overline{H}$ is of finite index in $G$.

Notice first that for every non-trivial normal subgroup $1\neq N\trianglelefteq G$, the subgroup $HN$ must also be of finite bi-index. Consequently, $HN/N$ is a subgroup of finite bi-index in $G/N$. As every proper quotient of $G$ possesses property $BF$ by assumption, $HN/N$ must be of finite index in $G/N$, and thus $HN$ must be of finite index in $G$.

Furthermore, as $H$ is a subgroup of finite bi-index, it is almost maximal by Proposition \ref{prop:FiniteBiIndexAlmostMaximal}, meaning that it is properly contained in only finitely many subgroups. We conclude that $\overline{H}$ is an intersection of finitely many subgroups of finite index, and it thus of finite index.
\end{proof}

By Proposition \ref{prop:QuotientsBranchGroupsBF}, every proper quotient of a branch group has property $BF$. Thus, in light of Lemma \ref{lemma:LookOnlyAlmostProDense}, it suffices to show that a quasi-prodense subgroup of infinite index of a branch group must also have infinite bi-index.

Before we do this, we first need a technical result.

\begin{lemma}\label{lemma:InvariantBiCosets}
Let $G$ be a group acting on a spherically homogeneous rooted tree $T$, and let $H<G$ be a subgroup of $G$. For every $n\in \N$, let us define a map
\begin{align*}
\nu_n \colon \St_G(n) &\longrightarrow \N \\
g &\longmapsto |\{v\in \level{n} \mid \varphi_v(g)\in H_v\}|
\end{align*}
where $H_v = \varphi_v(\St_H(v))$. Then, $\nu_n$ is an invariant of bi-cosets, in the sense that for every $g_1,g_2\in \St_G(n)$ with $Hg_1H=Hg_2H$, we have $\nu_n(g_1)=\nu_n(g_2)$.
\end{lemma}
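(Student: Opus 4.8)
The plan is to reduce the equality $\nu_n(g_1) = \nu_n(g_2)$ to a pointwise statement that can be summed over $\level{n}$. Writing $Hg_1H = Hg_2H$ as $g_2 = h_1 g_1 h_2$ with $h_1, h_2 \in H$, I would first record that, since $h_2$ permutes $\level{n}$, the assignment $v \mapsto w := h_2 v$ is a bijection of $\level{n}$; moreover $h_1 w = v$, because $g_2 v = v$ forces $h_1 g_1 h_2 v = h_1 g_1 w = h_1 w = v$, using that $g_1 \in \St_G(n)$ fixes $w \in \level{n}$. The goal is then to prove, for every $v \in \level{n}$, the equivalence
\[ \varphi_v(g_2) \in H_v \iff \varphi_w(g_1) \in H_w \qquad (w = h_2 v);\]
since $v \mapsto w$ is a bijection of $\level{n}$, this matches the vertices counted by $\nu_n(g_2)$ with those counted by $\nu_n(g_1)$, giving $\nu_n(g_2) = \nu_n(g_1)$.

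To establish the equivalence, I would repeatedly use the cocycle identity $\varphi_v(xy) = \varphi_{yv}(x)\varphi_v(y)$ together with the fact that $\varphi_v$ restricts to a homomorphism on $\St_G(v)$. Expanding the section of $g_2$ and using $g_1 h_2 v = g_1 w = w$ yields
\[\varphi_v(g_2) = \varphi_w(h_1)\,\varphi_w(g_1)\,\varphi_v(h_2).\]
Two structural facts then drive the argument. First, since $h_1 \St_H(w) h_1^{-1} = \St_H(v)$ (as $h_1$ maps $w$ to $v$), a short section computation gives the conjugation relation $H_v = \varphi_w(h_1)\, H_w\, \varphi_w(h_1)^{-1}$. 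Second, although $h_1$ and $h_2$ individually move vertices, the product $h_2 h_1$ fixes $w$ (indeed $h_2 h_1 w = h_2 v = w$), so $h_2 h_1 \in \St_H(w)$ and hence, by the cocycle identity, $\varphi_v(h_2)\,\varphi_w(h_1) = \varphi_w(h_2 h_1) \in H_w$.

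Combining these, and abbreviating $a = \varphi_w(h_1)$, $b = \varphi_v(h_2)$, $s = \varphi_w(g_1)$, the membership $\varphi_v(g_2) = a s b \in H_v = a H_w a^{-1}$ is equivalent to $s\,ba \in H_w$; and since $ba \in H_w$ by the second fact, this is in turn equivalent to $s \in H_w$, that is, to $\varphi_w(g_1) \in H_w$. Summing the resulting bijective correspondence over $v \in \level{n}$ completes the proof.

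I expect the main obstacle to be the bookkeeping: because $h_1, h_2$ need not lie in $\St_G(n)$, the sections $\varphi_v(h_i)$ are genuinely \emph{off-diagonal} (they compare the subtrees hanging at $v$ and at its image), and $\varphi_v$ is only multiplicative on a vertex stabiliser. The decisive observation that resolves this is that the \emph{products} $h_1 h_2$ and $h_2 h_1$ stabilise $v$ and $w$ respectively --- a consequence of $g_1, g_2 \in \St_G(n)$ --- so that their sections land back inside $H_v$ and $H_w$; everything else is a careful application of the cocycle formula.
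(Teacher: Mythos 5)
Your proof is correct and follows essentially the same route as the paper's: the same decomposition $g_2=h_1g_1h_2$, the same cocycle expansion $\varphi_v(g_2)=\varphi_w(h_1)\varphi_w(g_1)\varphi_v(h_2)$ with $w=h_2v$, and the same bijection $v\mapsto h_2v$ of $\level{n}$. The only difference is cosmetic: where the paper substitutes a witness $h_3\in\St_H(w)$ for $\varphi_w(g_1)$ and recombines to obtain one implication (then invokes symmetry), you isolate the conjugation relation $H_v=\varphi_w(h_1)H_w\varphi_w(h_1)^{-1}$ and the membership $\varphi_v(h_2)\varphi_w(h_1)\in H_w$ to get the equivalence directly.
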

\begin{proof}
Let us fix $n\in \N$, and let $g_1,g_2\in \St_G(n)$ be such that $Hg_1H=Hg_2H$. Then, there exist $h_1,h_2 \in H$ such that $g_2=h_1g_1h_2$. Notice that since both $g_1$ and $g_2$ are in the stabiliser of the $n$\textsuperscript{th} level, for every $v\in \level{n}$ we must have $h_1h_2v=v$.

For every $v\in \level{n}$, we have
\[\varphi_v(g_2)=\varphi_{g_1h_2v}(h_1)\varphi_{h_2v}(g_1)\varphi_v(h_2)=\varphi_{h_2v}(h_1)\varphi_{h_2v}(g_1)\varphi_v(h_2),\]
where the last equality follows from the fact that $g\in \St_G(n)$. We want to show that $\varphi_v(g_2)$ is in $H_v$ if $\varphi_{h_2v}(g_1)$ is in $H_{h_2v}$. Suppose that $\varphi_{h_2v}(g_1)\in H_{h_2v}$. Then, by definition, there exists $h_3\in \St_H(h_2v)$ such that $\varphi_{h_2v}(h_3)=\varphi_{h_2v}(g_1)$. Thus, we have
\[\varphi_v(g_2)=\varphi_{h_2v}(h_1)\varphi_{h_2v}(h_3)\varphi_v(h_2) = \varphi_{h_3h_2v}(h_1)\varphi_{h_2v}(h_3)\varphi_v(h_2) = \varphi_v(h_1h_3h_2),\]
and since $h_1h_3h_2 \in \St_H(v)$ due to the fact that $h_3h_2v=h_2v$ and $h_1h_2v=v$, we conclude that $\varphi_v(g_2) \in  H_v$.

We have thus shown that for each $v\in \level{n}$, we have $\varphi_v(g_2)\in H_v$ if $\varphi_{h_2v}(g_1)\in H_{h_2v}$. Since $h_2$ is a bijection on $\level{n}$, we conclude that $\nu_n(g_2)\geq\nu_n(g_1)$. By symmetry, we must have $\nu_n(g_1) = \nu_n(g_2)$.
\end{proof}

We are now in position to prove that quasi-prodense subgroups of infinite index of branch groups are of infinite bi-index.

\begin{lemma}\label{lemma:BranchFiniteProindexFiniteBiIndex}
Let $G$ be a branch group and let $H<G$ be a quasi-prodense subgroup of infinite index. Then, $H$ is of infinite bi-index.
\end{lemma}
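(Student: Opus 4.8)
The plan is to detect infinitely many distinct double cosets by means of the bi-coset invariant $\nu_n$ furnished by Lemma~\ref{lemma:InvariantBiCosets}. By the contrapositive of that lemma, any two elements of $\St_G(n)$ on which $\nu_n$ takes different values must lie in different double cosets of $H$. Hence it suffices to show that for every $K\in\N$ there is a level $n$ and elements of $\St_G(n)$ realising at least $K+1$ distinct values of $\nu_n$; letting $K\to\infty$ then forces $H\backslash G/H$ to be infinite.

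Fix $K\in\N$. Since $H$ is quasi-prodense of infinite index, Proposition~\ref{prop:NbProjectionsInfiniteIndexUnbounded} supplies a level $n$ and distinct vertices $v_1,\dots,v_K\in\level{n}$ with $[G_{v_i}:H_{v_i}]=\infty$ for each $i$. The \emph{key step} is then to produce, for each $i$, an element $g_i\in\rist_G(v_i)$ whose section at $v_i$ escapes $H_{v_i}$, i.e. $\varphi_{v_i}(g_i)\notin H_{v_i}$. This is where the branch hypothesis enters: because $\rist_G(n)$ has finite index in $G$, and because $\varphi_{v_i}$ kills every factor of the direct product $\rist_G(n)\cong\prod_{w\in\level{n}}\rist_G(w)$ except the one at $v_i$, the image $\varphi_{v_i}(\rist_G(v_i))=\varphi_{v_i}(\rist_G(n))$ has finite index in $G_{v_i}=\varphi_{v_i}(\St_G(v_i))$. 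A finite-index subgroup cannot sit inside the infinite-index subgroup $H_{v_i}$, so such a $g_i$ exists.

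With these elements in hand, for each subset $S\subseteq\{1,\dots,K\}$ I would set $g_S=\prod_{i\in S}g_i$. Since the $g_i$ have pairwise disjoint supports contained in distinct subtrees $T_{v_i}$, we have $g_S\in\rist_G(n)\leq\St_G(n)$, and for each vertex $v\in\level{n}$ the section $\varphi_v(g_S)$ equals $\varphi_{v_i}(g_i)$ if $v=v_i$ with $i\in S$ and is trivial otherwise. Thus $\varphi_v(g_S)\in H_v$ for every $v$ except the $v_i$ with $i\in S$, giving $\nu_n(g_S)=|\level{n}|-|S|$. As $|S|$ ranges over $0,1,\dots,K$ this yields $K+1$ distinct values of $\nu_n$, hence at least $K+1$ distinct double cosets by Lemma~\ref{lemma:InvariantBiCosets}.

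The genuinely substantive point is the second paragraph: arranging independent control of section membership at many vertices while staying inside a single level stabiliser. Working inside the rigid stabiliser $\rist_G(n)$ (available precisely because $G$ is branch) decouples the vertices via disjoint supports, and the finite-index-versus-infinite-index dichotomy guarantees at each relevant vertex a section lying outside $H_{v_i}$. Once this is set up, the counting is immediate, and the conclusion that $H$ has infinite bi-index follows since $K$ was arbitrary.
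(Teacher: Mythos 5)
Your proposal is correct and follows essentially the same route as the paper: invoke Proposition \ref{prop:NbProjectionsInfiniteIndexUnbounded} to get many vertices with infinite-index sections, use the finite index of $\rist_G(n)$ to find elements of the rigid stabilisers whose sections escape $H_{v_i}$, and separate double cosets via the invariant $\nu_n$ of Lemma \ref{lemma:InvariantBiCosets}. The only cosmetic difference is that you index the test elements by subsets $S\subseteq\{1,\dots,K\}$ whereas the paper uses the nested products $g_1, g_1g_2,\dots$; both yield the same count of values of $\nu_n$.
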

\begin{proof}
We will show that for any $M\in \N$, the number of bi-cosets of $H$ must be larger than $M$.

Let us fix an arbitrary $M\in \N$. By Proposition \ref{prop:NbProjectionsInfiniteIndexUnbounded}, there exists $n\in \N$ such that $ |\mathcal{V}_n|> M$, where
\[\mathcal{V}_n=\{v\in\level{n} \mid [G_v:H_v]=\infty\}\]
with $G_v=\varphi_v(\St_G(v))$ and $H_v = \varphi_v(\St_H(v))$.

Since $G$ is a branch group, $\rist_G(n)$ is of finite index in $G$. It follows that $\varphi_v(\rist_G(n))$ is of finite index in $G_v$ for all $v\in \level{n}$, and thus in particular for all $v\in \mathcal{V}_n$. It then follows that for every $v\in \mathcal{V}_n$, one can find an element $g_v\in \rist_G(v)$ such that $\varphi_v(g_v) \notin H_v$, using the fact that the index of $H_v$ in $G_v$ is infinite. Notice that since $g_v\in \rist_G(v)$, we have $\varphi_w(g_v)=1\in H_w$ for all $w\in \level{n}$ with $w\ne v$.

Let us write $\mathcal{V}_n=\{v_1, v_2, \dots, v_{|\mathcal{V}_n|}\}$. For every $i\in \{1,2,\dots, |\mathcal{V}_n|\}$, we define an element $g_i\in \rist_G(n)$ as follows:
\[g_i = \prod_{k=1}^{i}g_{v_i}.\]
By construction, for $w\in \level{n}$, we have $\varphi_w(g_i)\in H_w$ if and only if $w\notin \{v_1, \dots, v_i\}$. Therefore, for every $i\in \{1,2, \dots, |\mathcal{V}_n|\}$, we have
\[\nu_n(g_i) = |\level{n}|-i,\]
where $\nu_n(g_i)$ is the map defined in Lemma \ref{lemma:InvariantBiCosets}. As $i$ ranges from $1$ to $|\mathcal{V}_n|$, we conclude that the map $\nu_n$ takes at least $|\mathcal{V}_n|>M$ different values. Since two elements in the same bi-coset must have the same image under $\nu_n$ according to Lemma \ref{lemma:InvariantBiCosets}, there must exist more than $M$ different bi-cosets. This concludes the proof.
\end{proof}

Thanks to the previous result, we are finally ready to show that branch groups have property $BF$.

\begin{thm}\label{thm:BranchGroupsInBF}
Let $G$ be a branch group and let $H<G$ be a subgroup of finite bi-index. Then, $H$ is of finite index in $G$. In other words, branch groups have property $BF$.
\end{thm}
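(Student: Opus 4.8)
The plan is to obtain the conclusion by assembling the three preparatory results established above, thereby reducing the general statement to the already-settled case of quasi-prodense subgroups. I would argue by contradiction: assume that $H$ has infinite index in $G$ while retaining the hypothesis that $H$ has finite bi-index, and derive an incompatibility.

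First I would invoke Proposition \ref{prop:QuotientsBranchGroupsBF}, which guarantees that every proper quotient of the branch group $G$ possesses property $BF$. This is exactly the hypothesis required to apply Lemma \ref{lemma:LookOnlyAlmostProDense}: since $H<G$ has finite bi-index and all proper quotients of $G$ have property $BF$, that lemma yields that $H$ is quasi-prodense in $G$, meaning that its pro-normal closure $\overline{H}=\bigcap_{N\trianglelefteq G,\, N\ne 1}HN$ is of finite index in $G$. Note that this step uses only the finite bi-index of $H$ and makes no reference to its index in $G$.

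Next I would combine this with the standing assumption that $[G:H]=\infty$. Under these two conditions, $H$ is a quasi-prodense subgroup of infinite index of the branch group $G$, so Lemma \ref{lemma:BranchFiniteProindexFiniteBiIndex} applies and forces $H$ to have \emph{infinite} bi-index. This contradicts the hypothesis that $H$ is of finite bi-index. Hence the assumption $[G:H]=\infty$ is untenable, and $H$ must have finite index in $G$, which is precisely the assertion that branch groups have property $BF$.

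The difficulty of the theorem is not located in this final assembly, which is essentially formal, but entirely in its two inputs. The genuinely hard step is Lemma \ref{lemma:BranchFiniteProindexFiniteBiIndex}, whose proof rests on the section machinery of Section \ref{sec:ProdenseSubgroups}: one first shows, via Proposition \ref{prop:NbProjectionsInfiniteIndexUnbounded}, that a quasi-prodense subgroup of infinite index has sections $H_v$ of infinite index in $G_v$ on arbitrarily many vertices of some level $n$, and then exploits the bi-coset invariant $\nu_n$ of Lemma \ref{lemma:InvariantBiCosets} together with the rigid stabilisers supplied by branchness to manufacture arbitrarily many distinct double cosets. With those results available, as they are here, the theorem follows immediately from the short reduction just described.
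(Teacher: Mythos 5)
Your argument is correct and coincides with the paper's own proof: both combine Proposition \ref{prop:QuotientsBranchGroupsBF} with Lemma \ref{lemma:LookOnlyAlmostProDense} to conclude that $H$ is quasi-prodense, and then apply Lemma \ref{lemma:BranchFiniteProindexFiniteBiIndex} (you use its contrapositive via a contradiction, which is only a cosmetic difference). No gaps.
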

\begin{proof}
If $H<G$ is a subgroup of finite bi-index, then it is a quasi-prodense subgroup of $G$ by Lemma \ref{lemma:LookOnlyAlmostProDense} and Proposition \ref{prop:QuotientsBranchGroupsBF}. Therefore, by Lemma \ref{lemma:BranchFiniteProindexFiniteBiIndex}, $H$ must be of finite index in $G$.
\end{proof}

As a consequence of the previous result, we obtain that branch groups do not admit quasi-2-transitive actions on infinite sets.

\begin{MainThm}
A branch group cannot act quasi-$2$-transitively on an infinite set.
\end{MainThm}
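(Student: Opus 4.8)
The plan is to deduce the Main Theorem from Theorem~\ref{thm:BranchGroupsInBF} using the equivalence established in Proposition~\ref{prop:Infinite2TransitiveFiniteBiIndex}. By that proposition, a group $G$ admits a quasi-$2$-transitive action on an infinite set if and only if it possesses an infinite index subgroup of finite bi-index; equivalently, $G$ fails to have property $BF$. Since Theorem~\ref{thm:BranchGroupsInBF} asserts precisely that every branch group has property $BF$, the two statements are contrapositives of one another, and the argument is essentially immediate.

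Concretely, I would argue by contradiction. Suppose a branch group $G$ acts quasi-$2$-transitively on an infinite set $X$. By the forward direction of Proposition~\ref{prop:Infinite2TransitiveFiniteBiIndex}, there exists a subgroup $H<G$ of infinite index whose bi-index $|H\backslash G/H|$ is finite. But Theorem~\ref{thm:BranchGroupsInBF} states that every subgroup of finite bi-index in a branch group has finite index. Applying this to $H$ forces $[G:H]<\infty$, contradicting the fact that $H$ has infinite index. Hence no such action can exist.

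The work has already been done in the preceding sections, so there is no genuine obstacle left: the entire technical content lives in Theorem~\ref{thm:BranchGroupsInBF} (which rests on Lemma~\ref{lemma:BranchFiniteProindexFiniteBiIndex}, Lemma~\ref{lemma:LookOnlyAlmostProDense}, and Proposition~\ref{prop:QuotientsBranchGroupsBF}) and in the chain of quasi-prodense arguments culminating in Proposition~\ref{prop:NbProjectionsInfiniteIndexUnbounded}. The only point requiring care is the bookkeeping in Proposition~\ref{prop:Infinite2TransitiveFiniteBiIndex}: one must remember that quasi-$2$-transitivity on $X$ does not directly give a coset action, but rather that an infinite orbit must exist (since the whole set is infinite and has only finitely many $G$-orbits by quasi-$1$-transitivity, which follows from quasi-$2$-transitivity), and the stabiliser of a point in such an orbit then serves as the desired infinite index subgroup of finite bi-index. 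With that translation in hand, the Main Theorem is a one-line consequence.

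I would therefore write the proof simply as: by Theorem~\ref{thm:BranchGroupsInBF}, every branch group has property $BF$, and by the remark following Definition~\ref{defn:PropertyBF} a group admits a quasi-$2$-transitive action on an infinite set if and only if it lacks property $BF$; the conclusion follows.
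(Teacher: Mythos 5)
Your proposal is correct and follows exactly the paper's own route: invoke Proposition~\ref{prop:Infinite2TransitiveFiniteBiIndex} to translate quasi-$2$-transitivity on an infinite set into the existence of an infinite index subgroup of finite bi-index, and then contradict Theorem~\ref{thm:BranchGroupsInBF}. The extra remark about extracting an infinite orbit and taking a point stabiliser is precisely the content already handled inside the proof of Proposition~\ref{prop:Infinite2TransitiveFiniteBiIndex}, so nothing further is needed.
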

\begin{proof}
Let $G$ be a branch group. By Proposition \ref{prop:Infinite2TransitiveFiniteBiIndex}, to admit a quasi-$2$-transitive action of an infinite set, $G$ must contain a subgroup $H$ of infinite index but of finite bi-index, which is impossible according to Theorem \ref{thm:BranchGroupsInBF}.
\end{proof}

\bibliographystyle{plain}
\bibliography{biblio}

\begin{thebibliography}{10}

\bibitem{AlexoudasKlopschThillaisundaram16}
Theofanis Alexoudas, Benjamin Klopsch, and Anitha Thillaisundaram.
\newblock Maximal subgroups of multi-edge spinal groups.
\newblock {\em Groups Geom. Dyn.}, 10(2):619--648, 2016.

\bibitem{BartholdiErschler12}
Laurent Bartholdi and Anna Erschler.
\newblock Growth of permutational extensions.
\newblock {\em Invent. Math.}, 189(2):431--455, 2012.

\bibitem{BartholdiErschler14}
Laurent Bartholdi and Anna Erschler.
\newblock Groups of given intermediate word growth.
\newblock {\em Ann. Inst. Fourier (Grenoble)}, 64(5):2003--2036, 2014.

\bibitem{BartholdiGrigorchukSunic03}
Laurent Bartholdi, Rostislav~I. Grigorchuk, and Zoran \v{S}uni\'{k}.
\newblock Branch groups.
\newblock In {\em Handbook of algebra, {V}ol. 3}, volume~3 of {\em Handb.
  Algebr.}, pages 989--1112. Elsevier/North-Holland, Amsterdam, 2003.

\bibitem{Bondarenko10}
Ievgen~V. Bondarenko.
\newblock Finite generation of iterated wreath products.
\newblock {\em Arch. Math. (Basel)}, 95(4):301--308, 2010.

\bibitem{Cameron76}
Peter~J. Cameron.
\newblock Transitivity of permutation groups on unordered sets.
\newblock {\em Math. Z.}, 148(2):127--139, 1976.

\bibitem{Cameron09}
Peter~J. Cameron.
\newblock Oligomorphic permutation groups.
\newblock In {\em Perspectives in mathematical sciences. {II}}, volume~8 of
  {\em Stat. Sci. Interdiscip. Res.}, pages 37--61. World Sci. Publ.,
  Hackensack, NJ, 2009.

\bibitem{Chaynikov12}
Vladimir~V. Chaynikov.
\newblock {\em Properties of hyperbolic groups: {F}ree normal subgroups,
  quasiconvex subgroups and actions of maximal growth}.
\newblock ProQuest LLC, Ann Arbor, MI, 2012.
\newblock Thesis (Ph.D.)--Vanderbilt University.

\bibitem{Cornulier06}
Yves de~Cornulier.
\newblock Finitely presented wreath products and double coset decompositions.
\newblock {\em Geom. Dedicata}, 122:89--108, 2006.

\bibitem{DixonMortimer96}
John~D. Dixon and Brian Mortimer.
\newblock {\em Permutation groups}, volume 163 of {\em Graduate Texts in
  Mathematics}.
\newblock Springer-Verlag, New York, 1996.

\bibitem{FimaLeMaitreMoonStalder20Pre}
Pierre Fima, Fran{\c{c}}ois Le~Ma{\^\i}tre, Soyoung Moon, and Yves Stalder.
\newblock A characterization of high transitivity for groups acting on trees.
\newblock {\em arXiv preprint arXiv:2003.11116}, 2020.

\bibitem{Francoeur20a}
Dominik Francoeur.
\newblock On maximal subgroups of infinite index in branch and weakly branch
  groups.
\newblock {\em J. Algebra}, 560:818--851, 2020.

\bibitem{FrancoeurGarrido18}
Dominik Francoeur and Alejandra Garrido.
\newblock Maximal subgroups of groups of intermediate growth.
\newblock {\em Adv. Math.}, 340:1067--1107, 2018.

\bibitem{GarionGlasner13}
Shelly Garion and Yair Glasner.
\newblock Highly transitive actions of {${\rm Out}(F_n)$}.
\newblock {\em Groups Geom. Dyn.}, 7(2):357--376, 2013.

\bibitem{GlasnerSoutoStorm10}
Yair Glasner, Juan Souto, and Peter Storm.
\newblock Finitely generated subgroups of lattices in {${\textrm
  PSL}_2\mathbb{C}$}.
\newblock {\em Proc. Amer. Math. Soc.}, 138(8):2667--2676, 2010.

\bibitem{Grigorchuk83}
Rostislav~I. Grigorchuk.
\newblock On the {M}ilnor problem of group growth.
\newblock {\em Dokl. Akad. Nauk SSSR}, 271(1):30--33, 1983.

\bibitem{Grigorchuk00}
Rostislav~I. Grigorchuk.
\newblock Just infinite branch groups.
\newblock In {\em New horizons in pro-{$p$} groups}, volume 184 of {\em Progr.
  Math.}, pages 121--179. Birkh\"{a}user Boston, Boston, MA, 2000.

\bibitem{HullOsin16}
Michael Hull and Denis Osin.
\newblock Transitivity degrees of countable groups and acylindrical
  hyperbolicity.
\newblock {\em Israel J. Math.}, 216(1):307--353, 2016.

\bibitem{Kantor72}
William~M. Kantor.
\newblock {$k$}-homogeneous groups.
\newblock {\em Math. Z.}, 124:261--265, 1972.

\bibitem{Kitroser12}
Daniel Kitroser.
\newblock Highly-transitive actions of surface groups.
\newblock {\em Proc. Amer. Math. Soc.}, 140(10):3365--3375, 2012.

\bibitem{KlopschThillaisundaram18}
Benjamin Klopsch and Anitha Thillaisundaram.
\newblock Maximal subgroups and irreducible representations of generalized
  multi-edge spinal groups.
\newblock {\em Proc. Edinb. Math. Soc. (2)}, 61(3):673--703, 2018.

\bibitem{LeBoudecMatteBon19Pre}
Adrien Le~Boudec and Nicol{\'a}s Matte~Bon.
\newblock Triple transitivity and non-free actions in dimension one.
\newblock {\em arXiv preprint arXiv:1906.05744}, 2019.

\bibitem{LivingstoneWagner65}
Donald Livingstone and Ascher Wagner.
\newblock Transitivity of finite permutation groups on unordered sets.
\newblock {\em Math. Z.}, 90:393--403, 1965.

\bibitem{McDonough77}
T.~P. McDonough.
\newblock A permutation representation of a free group.
\newblock {\em Quart. J. Math. Oxford Ser. (2)}, 28(111):353--356, 1977.

\bibitem{Pervova05}
Ekaterina~L. Pervova.
\newblock Maximal subgroups of some non locally finite {$p$}-groups.
\newblock {\em Internat. J. Algebra Comput.}, 15(5-6):1129--1150, 2005.

\end{thebibliography}

\end{document}